\documentclass[11pt,a4paper]{article}

\usepackage{inputenc}
\usepackage{amsmath}
\usepackage{bm}
\usepackage{bbold}
\usepackage{amsthm}
\usepackage{enumerate}

\usepackage{hyperref}

\setlength{\mathsurround}{1pt}

\title{A Multidimensional Tropical Optimisation Problem with a Nonlinear Objective Function and Linear Constraints}

\author{Nikolai Krivulin\thanks{Faculty of Mathematics and Mechanics, Saint Petersburg State University, 28 Universitetsky Ave., Saint Petersburg, 198504, Russia, 
nkk@math.spbu.ru.}\thanks{The work was supported in part by the Russian Foundation for Humanities under Grant \#13-02-00338.}
}

\date{}

\newtheorem{theorem}{Theorem}
\newtheorem{lemma}[theorem]{Lemma}
\newtheorem{corollary}[theorem]{Corollary}
\newtheorem{proposition}{Proposition}

\setlength{\unitlength}{1mm}


\begin{document}

\maketitle

\begin{abstract}
We examine a multidimensional optimisation problem in the tropical mathematics setting. The problem involves the minimisation of a nonlinear function defined on a finite-dimensional semimodule over an idempotent semifield subject to linear inequality constraints. We start with an overview of known tropical optimisation problems with linear and nonlinear objective functions. A short introduction to tropical algebra is provided to offer a formal framework for solving the problem under study. As a preliminary result, a solution to a linear inequality with an arbitrary matrix is presented. We describe an example optimisation problem drawn from project scheduling and then offer a general representation of the problem. To solve the problem, we introduce an additional variable and reduce the problem to the solving of a linear inequality, in which the variable plays the role of a parameter. A necessary and sufficient condition for the inequality to hold is used to evaluate the parameter, whereas the solution to the inequality is considered a solution to the problem. Based on this approach, a complete direct solution in a compact vector form is derived for the optimisation problem under fairly general conditions. Numerical and graphical examples for two-dimensional problems are given to illustrate the obtained results.
\\

\textbf{Key-Words:} idempotent semifield, multidimensional optimisation problem, nonlinear objective function, linear inequality constraints, project scheduling.
\\

\textbf{MSC (2010):} 65K10, 15A80, 90C48, 90B35
\end{abstract}

\section{Introduction}

As an applied mathematical theory and methods based on the notion of idempotent semirings, tropical (idempotent) mathematics, has its origin in a few works, including \cite{Pandit1961Anew,CuninghameGreen1962Describing,Vorobjev1963Theextremal,Romanovskii1964Asymptotic}, all of which were inspired by problems from operations research. Over the last decades, a number of studies have achieved many theoretical results and developed various applications; these are reported in monographs \cite{Cuninghamegreen1979Minimax,Carre1979Graphs,Zimmermann1981Linear,Baccelli1993Synchronization,Kolokoltsov1997Idempotent,Golan2003Semirings,Heidergott2006Maxplus,Gondran2008Graphs,Krivulin2009Methods,Butkovic2010Maxlinear} and in a wide range of contributed papers.

One of the areas directly concerned with problems in operations research is tropical optimisation, which is an area that addresses the analysis and solution of optimisation problems formulated in the tropical mathematics setting. A minimax earliness problem in machine scheduling that was previously studied \cite{Cuninghamegreen1976Projections} is an early instance of the real-world problems that can be represented and solved within the framework of tropical mathematics. Further examples include multidimensional optimisation problems in location analysis, transportation networks, decision making, and discrete event systems.

Optimisation problems usually take the form of minimising (maximising) functions that are defined on finite-dimensional semimodules over idempotent semifields and may have additional equality and inequality constraints imposed on the feasible solutions. Some problems involve both an objective function and constraints that are linear in the tropical mathematics sense. These tropical linear optimisation problems, which appear to be formal analogues of those in conventional linear programming, were apparently first considered in \cite{Hoffman1963Onabstract,Superville1978Various,Zimmermann1981Linear}. 

In another important class of problems that dates back in the literature to the works \cite{Cuninghamegreen1976Projections,Cuninghamegreen1979Minimax,Zimmermann1981Linear}, the constraints are linear, but the objective function is not. This class actually contains problems with functions that involve a conjugate transposition operator, which does not preserve tropical linearity. 

A number of techniques have been developed to solve tropical optimisation problems. Some problems, such as those in the early works \cite{Hoffman1963Onabstract,Cuninghamegreen1976Projections,Superville1978Various,Zimmermann1981Linear}, can be completely solved, and the solution is obtained in an explicit vector form in terms of a general semiring. In contrast, the existing solutions given for other problems concentrate on a particular semifield and are obtained in the form of iterative algorithms that give particular solutions (if there are any) or indicate that there is no solution (see, for instance, \cite{Butkovic1984Onproperties,Zimmermann1984Some}).

In this paper, we examine a multidimensional optimisation problem that appears in project management when an optimal schedule is constructed to minimise the flow time of activities in a project under various activity precedence constraints.

We formulate the problem in terms of a general semimodule over an idempotent linearly ordered radicable semifield. Given two arbitrary matrices, the problem is to minimise a nonlinear objective function that is defined by one of the matrices through a multiplicative conjugate transpose and is subject to linear inequality constraints that are provided by the other matrix. The problem presents a sufficient extension of that considered in \cite{Krivulin2012Atropical}; in the previous study, an irreducibility condition was imposed on the matrices, and the inequality constraints were less general. Solutions to an unconstrained version of the problem, in addition to related applications in location analysis and stochastic discrete event systems, are presented in \cite{Cuninghamegreen1979Minimax,Krivulin2005Evaluation,Krivulin2006Eigenvalues,Krivulin2009Methods,Krivulin2011Analgebraic,Krivulin2011Anextremal}.

To solve the problem, we implement and further develop the results offered in \cite{Krivulin2006Solution,Krivulin2006Eigenvalues,Krivulin2009Methods}, including solutions to linear inequalities and extremal properties of the eigenvalues of matrices. We follow the approach proposed in \cite{Krivulin2012Atropical}, which introduced an additional unknown variable and then reduced the problem to the solving of a linear inequality, in which the new variable plays the role of a parameter. A necessary and sufficient condition for the inequality to hold is used to evaluate the parameter, whereas the solution of the inequality is taken as a solution to the initial problem. As a result, under fairly general conditions, a complete direct solution to the problem is derived in a compact vector form, which is suitable to both further analysis and applications.

The rest of the paper is as follows. We start with an overview of known tropical optimisation problems with both linear and nonlinear objective functions and linear constraints. We discuss related solution methods and indicate the areas of application. A short introduction to tropical algebra is then given to provide a formal framework in terms of a general idempotent semifield for the solving of the optimisation problem under study. The introduction includes graphical illustrations that are intended to clarify the main definitions and basic facts using the classical semifield $\mathbb{R}_{\max,+}$ as an example.

Furthermore, a complete explicit solution for a linear inequality is obtained as an important prerequisite. The solution extends the known results for the inequality with an irreducible matrix to the case of irreducible matrices. Finally, we offer an example optimisation problem drawn from project scheduling and provide a formal description of the problem. We exploit the above solution for inequalities to obtain a complete solution to the problem. Numerical examples for the solving of two-dimensional problems with graphical illustrations are also included.

\section{Tropical optimisation problems} 

Multidimensional optimisation problems form a significant research domain in tropical mathematics, dating back to the works \cite{Hoffman1963Onabstract,Cuninghamegreen1976Projections}. These problems arise in real-world applications in various areas. Specifically, \cite{Cuninghamegreen1976Projections,Zimmermann1981Linear,Zimmermann1984Some,Zimmermann2006Interval,Butkovic2009Introduction,Butkovic2009Onsome,Aminu2012Nonlinear} provide tropical solutions to problems in project scheduling. Furthermore, \cite{Cuninghamegreen1991Minimax,Krivulin2011Analgebraic,Krivulin2011Anextremal,Krivulin2012Anew} solve problems in location analysis. Applications have been  developed in transportation networks \cite{Zimmermann1981Linear,Zimmermann2006Interval}, decision making \cite{Elsner2004Maxalgebra}, and discrete event systems \cite{Krivulin2005Evaluation}.

Below, we provide a short overview of one class of optimisation problems and briefly discuss their solution methods. The problems are formulated to minimise functions defined on vectors of a semimodule over an idempotent linearly ordered radicable semifield. We consider both unconstrained and constrained problems, in which the constraints have the form of linear vector equations and inequalities.

We also present optimisation problems with both linear and nonlinear objective functions. However, only those nonlinear functions that are composed of a conjugate transposition operator are under study. As shown in our overview, a substantial part of the problems known in the literature directly falls in this category or can be placed in it after some equivalent transformations.

In the description of the problems, we use the symbols $\bm{A}$, $\bm{B}$, and $\bm{C}$ for given matrices, $\bm{b}$, $\bm{d}$, $\bm{p}$, and $\bm{q}$ for given vectors, and $\bm{x}$ for the unknown vector. The matrix and vector operations are considered to be defined in terms of an idempotent semifield. The minus sign in the exponent denotes the multiplicative conjugate transpose.

Specifically, for the real semifield $\mathbb{R}_{\max,+}$, matrix addition and matrix and scalar multiplications follow the conventional rules, in which maximum and addition play the roles of scalar addition and multiplication, respectively. The multiplicative conjugate transpose of a column vector is a row vector that is obtained by replacing the former one with its usual opposite and transposition. Further details on the notation used here and throughout the paper and the related definitions are included in the next section.

\subsection{Linear objective functions}

One of the long-known and well-studied tropical optimisation problems is a direct tropical analogue of linear programming problems:
\begin{equation*}
\begin{aligned}
&
\text{minimise}
&&
\bm{p}^{T}\bm{x},
\\
&
\text{subject to}
&&
\bm{A}\bm{x}
\geq
\bm{d}.
\end{aligned}
\end{equation*}

Complete closed-form solutions to the problem under various algebraic assumptions, in addition to the related duality results, are provided in a number of publications. Specifically, \cite{Hoffman1963Onabstract,Zimmermann1981Linear} examined the problem for the case of general semirings; the former study offers a solution based on an abstract extension of the conventional linear programming duality, and the second study suggested a residual-based solution technique. Furthermore, \cite{Superville1978Various} developed the results in \cite{Hoffman1963Onabstract} to concentrate on the idempotent semifield $\mathbb{R}_{\max,+}$, and \cite{Gavalec2012Duality} provided a solution for $\mathbb{R}_{\max,+}$ and $\mathbb{R}_{\max,\times}$ in the context of the theory of max-separable functions.

A tropical optimisation problem in $\mathbb{R}_{\max,+}$ with additional constraints, which can be written in the form
\begin{equation*}
\begin{aligned}
&
\text{minimise}
&&
\bm{p}^{T}\bm{x},
\\
&
\text{subject to}
&&
\bm{A}\bm{x}
\leq
\bm{d},
\quad
\bm{C}\bm{x}
\geq
\bm{b},
\\
&&&
\bm{g}
\leq
\bm{x}
\leq
\bm{h},
\end{aligned}
\end{equation*}
was considered in \cite{Zimmermann1984Onmaxseparable,Zimmermann1992Optimization,Zimmermann2003Disjunctive,Zimmermann2006Interval} within the framework of max-separable functions. Under general conditions, explicit solutions to the problem are given in conventional terms rather than in a closed form of tropical vector algebra.

The solutions to another problem in $\mathbb{R}_{\max,+}$ with two-sided equality constraints,
\begin{equation*}
\begin{aligned}
&
\text{minimise}
&&
\bm{p}^{T}\bm{x},
\\
&
\text{subject to}
&&
\bm{A}\bm{x}\oplus\bm{b}
=
\bm{C}\bm{x}\oplus\bm{d},
\end{aligned}
\end{equation*}
were obtained in \cite{Butkovic1984Onproperties,Butkovic2010Maxlinear,Butkovic2009Introduction,Aminu2012Nonlinear}. In particular, \cite{Butkovic2009Introduction} presented a pseudo-polynomial algorithm to evaluate a solution or to indicate that no solutions exist. The algorithm uses an alternating method developed by \cite{Cuninghamegreen2003Theequation} to replace the equality constraint with two opposite inequalities, which are solved iteratively to provide progressively better bounds for a solution. A heuristic approach was suggested in \cite{Aminu2012Nonlinear} to obtain an approximate solution using local search techniques combined with iterative procedures that solve low-dimensional problems with one and two variables.

\subsection{Nonlinear objective functions}

We start with a particular problem that can be written in the form
\begin{equation*}
\begin{aligned}
&
\text{minimise}
&&
(\bm{A}\bm{x})^{-}\bm{d},
\\
&
\text{subject to}
&&
\bm{A}\bm{x}
\leq
\bm{d}.
\end{aligned}
\end{equation*}

The problem is formulated to find a vector $\bm{x}$ that provides the best underestimating approximation to the vector $\bm{d}$ by vectors $\bm{A}\bm{x}$ in the sense of the Chebyshev distance, which is defined as the maximum absolute difference between the corresponding coordinates of the vectors. A complete explicit solution is given in \cite{Cuninghamegreen1976Projections} as an application of an abstract theory of linear operators over $\mathbb{R}_{\max,+}$. A similar solution was also suggested by \cite{Zimmermann1981Linear}.

Another two problems are initially represented in a different manner but can be written in a form with a tropical nonlinear objective function and linear constraints. A problem in which a Chebyshev distance function is minimised under some constraints was defined in $\mathbb{R}_{\max,+}$ and solved with a polynomial-time threshold-type algorithm in \cite{Zimmermann1984Some}. In fact, using the same algebraic manipulations described in \cite{Krivulin2011Analgebraic,Krivulin2012Anew}, the problem can be rearranged to take the form
\begin{equation*}
\begin{aligned}
&
\text{minimise}
&&
(\bm{A}\bm{x})^{-}\bm{p}\oplus\bm{q}^{-}\bm{A}\bm{x},
\\
&
\text{subject to}
&&
\bm{g}
\leq
\bm{x}
\leq
\bm{h}.
\end{aligned}
\end{equation*}

A problem that minimises the range norm, which is defined as the maximum deviation between the coordinates of a vector, was considered in \cite{Butkovic2009Onsome}, and an explicit solution was suggested in a combined framework that involves both the semifield $\mathbb{R}_{\max,+}$ and its dual $\mathbb{R}_{\min,+}$. The problem can actually be written only in terms of $\mathbb{R}_{\max,+}$ as
\begin{equation*}
\begin{aligned}
&
\text{minimise}
&&
\mathbb{1}^{T}\bm{A}\bm{x}(\bm{A}\bm{x})^{-}\mathbb{1},
\end{aligned}
\end{equation*}
where $\mathbb{1}=(\mathbb{1},\ldots,\mathbb{1})^{T}$ is a vector of ones (in terms of the semifield $\mathbb{R}_{\max,+}$).

Based on a connection between the solutions of two-sided equality constraints and a mean payoff game established by \cite{Akian2012Tropical}, an iterative computational scheme was developed in \cite{Gaubert2012Tropical} to find a solution in $\mathbb{R}_{\max,+}$ to a problem with a nonlinear objective function and a two-sided inequality constraint given by
\begin{equation*}
\begin{aligned}
&
\text{minimise}
&&
\bm{p}^{T}\bm{x}(\bm{q}^{T}\bm{x})^{-1},
\\
&
\text{subject to}
&&
\bm{A}\bm{x}\oplus\bm{b}
\leq
\bm{C}\bm{x}\oplus\bm{d}.
\end{aligned}
\end{equation*}

We now consider problems that are formulated in terms of a general linear ordered radicable semifield and admit explicit solutions in a vector form. The analytic technique implemented to solve these problems is based on new results in tropical spectral theory and solutions to the linear tropical inequality developed in \cite{Krivulin2006Eigenvalues,Krivulin2006Solution,Krivulin2009Methods}.

First, a complete solution is given in \cite{Krivulin2012Acomplete} to the unconstrained problem
\begin{equation*}
\begin{aligned}
&
\text{minimise}
&&
\bm{x}^{-}\bm{A}\bm{x}\oplus\bm{x}^{-}\bm{p}\oplus\bm{q}^{-}\bm{x}.
\end{aligned}
\end{equation*}

Furthermore, \cite{Krivulin2012Anew} offers a direct explicit solution to another unconstrained problem written in the form
\begin{equation*}
\begin{aligned}
&
\text{minimise}
&&
(\bm{A}\bm{x})^{-}\bm{p}\oplus\bm{q}^{-}\bm{A}\bm{x}.
\end{aligned}
\end{equation*}

It has also been shown that two problems with linear inequality and equality constraints,
\begin{equation*}
\begin{aligned}
&
\text{minimise}
&&
\bm{x}^{-}\bm{p}\oplus\bm{q}^{-}\bm{x},
\\
&
\text{subject to}
&&
\bm{A}\bm{x}
\leq
\bm{x};
\end{aligned}
\qquad\qquad
\begin{aligned}
&
\text{minimise}
&&
\bm{x}^{-}\bm{p}\oplus\bm{q}^{-}\bm{x},
\\
&
\text{subject to}
&&
\bm{A}\bm{x}
=
\bm{x};
\end{aligned}
\end{equation*}
can be reduced to the former unconstrained problem and can thus be solved to obtain explicit solutions.

In addition, a brief conference paper \cite{Krivulin2012Atropical} provides a complete direct solution to the problem
\begin{equation*}
\begin{aligned}
&
\text{minimise}
&&
\bm{x}^{-}\bm{A}\bm{x},
\\
&
\text{subject to}
&&
\bm{B}\bm{x}
\leq
\bm{x}
\end{aligned}
\end{equation*}
under the condition that at least one of the matrices $\bm{A}$ and $\bm{B}$ is irreducible.

In this paper, we examine in detail a more general version of the last problem with arbitrary matrices and additional constraints.

\section{Preliminary definitions and results}

The purpose of this section is to offer a brief introduction to tropical (idempotent) mathematics to provide an appropriate theoretical framework for the subsequent analysis and solution of tropical optimisation problems.

Both concise introductions to and comprehensive presentations of the theory and methods are suggested in a range of published works \cite{Vorobjev1967Extremal,Cuninghamegreen1979Minimax,Cuninghamegreen1994Minimax,Carre1979Graphs,Zimmermann1981Linear,Baccelli1993Synchronization,Kolokoltsov1997Idempotent,Golan2003Semirings,Heidergott2006Maxplus,Litvinov2007Themaslov,Gondran2008Graphs,Butkovic2010Maxlinear}, some of which vary the notation system adopted and the form of the results presented.

Below, we give an overview of the definitions, notation, and background results, which mainly follow \cite{Krivulin2006Solution,Krivulin2006Eigenvalues,Krivulin2009Methods}, to form a basis for the complete solution to the problem under study in an explicit vector form and to provide clear geometric illustrations. Additional details and an extensive bibliography can be found in the publications listed above.

\subsection{Idempotent semifield}

Let $\mathbb{X}$ be a set equipped with two operations (addition $\oplus$ and multiplication $\otimes$) and their related neutral elements (null $\mathbb{0}$ and identity $\mathbb{1}$). Both operations are assumed to be associative and commutative, and multiplication is assumed to be distributive over addition. Furthermore, addition is idempotent and thus provides $x\oplus x=x$ for all $x\in\mathbb{X}$. Multiplication is invertible, which means that there exist a multiplicative inverse $x^{-1}$ for any $x\in\mathbb{X}_{+}$, where $\mathbb{X}_{+}=\mathbb{X}\setminus\{\mathbb{0}\}$. Endowed with these properties, the algebraic structure $\langle\mathbb{X},\mathbb{0},\mathbb{1},\oplus,\otimes\rangle$ is commonly referred to in the literature as the idempotent semifield.

Due to the associativity of multiplication, the power notation with integer exponents is introduced in the usual way. For any $x\in\mathbb{X}_{+}$ and an integer $p\geq1$, $x^{0}=\mathbb{1}$, $\mathbb{0}^{p}=\mathbb{0}$, $x^{p}=x^{p-1}\otimes x=x\otimes x^{p-1}$, and $x^{-p}=(x^{-1})^{p}$. Moreover, we assume that the integer power extends to rational exponents, which makes the semifield radicable. 

For the sake of brevity, we do not use the multiplication sign $\otimes$ for here on, whereas the exponents are considered in the sense of the above notation.

A partial order is induced on $\mathbb{X}$ by idempotent addition such that $x\leq y$ if and only if $x\oplus y=y$. This definition implies an extremal property of addition in the form of inequalities $x\leq x\oplus y$ and $y\leq x\oplus y$. Moreover, according to the order, both addition and multiplication are monotone, which implies that the inequalities $x\leq u$ and $y\leq v$ result in $x\oplus y\leq u\oplus v$ and $xy\leq uv$, respectively. 

Furthermore, the partial order is assumed to extend to a total order, which makes the semifield linearly ordered. Throughout the paper, we interpret the relation symbols and the problem formulations in terms of this linear order.

Instances of the linearly ordered radicable idempotent semifield under study include $\mathbb{R}_{\max,+}=\langle\mathbb{R}\cup\{-\infty\},-\infty,0,\max,+\rangle$, $\mathbb{R}_{\min,+}=\langle\mathbb{R}\cup\{+\infty\},+\infty,0,\min,+\rangle$, $\mathbb{R}_{\max,\times}=\langle\mathbb{R}_{+}\cup\{0\},0,1,\max,\times\rangle$, and $\mathbb{R}_{\min,\times}=\langle\mathbb{R}_{+}\cup\{+\infty\},+\infty,1,\min,\times\rangle$, where $\mathbb{R}$ is the set of real numbers and $\mathbb{R}_{+}=\{x\in\mathbb{R}|x>0\}$. 

For example, the semifield $\mathbb{R}_{\max,+}$ has null $\mathbb{0}=-\infty$ and identity $\mathbb{1}=0$. An inverse $x^{-1}$ is defined for each $x\in\mathbb{R}$ and is equal to $-x$ in conventional algebra. For any $x,y\in\mathbb{R}$, the power $x^{y}$ coincides with the arithmetic product $xy$. The order induced by idempotent addition corresponds to the natural linear order on $\mathbb{R}$.

The semifield $\mathbb{R}_{\min,\times}$ is equipped with $\mathbb{0}=+\infty$ and $\mathbb{1}=1$. The inverse and power notations exhibit the standard interpretation. The relation $\leq$ defines a reverse order to the linear order on $\mathbb{R}$. 

Note that all of the above mentioned semifields are isomorphic to each other.

\subsection{Idempotent semimodule}

Let $\mathbb{X}^{n}$ be the Cartesian power with elements that are taken to be column vectors. There is a vector $\mathbb{0}\in\mathbb{X}^{n}$ that contains all zero components. Any vector without zero components is called regular. The set of all regular vectors in $\mathbb{X}^{n}$ is represented by $\mathbb{X}_{+}^{n}$.

For any vectors $\bm{a}=(a_{i})$ and $\bm{b}=(b_{i})$ from $\mathbb{X}^{n}$, and a scalar $x\in\mathbb{X}$, vector addition and scalar multiplication are defined component-wise as
$$
\{\bm{a}\oplus\bm{b}\}_{i}
=
a_{i}\oplus b_{i},
\qquad
\{x\bm{a}\}_{i}
=
xa_{i}.
$$

Endowed with these operations, the Cartesian power $\mathbb{X}^{n}$ forms an idempotent semimodule over $\mathbb{X}$.

The extremal property of scalar addition extends to the component-wise vector inequalities $\bm{a}\leq\bm{a}\oplus\bm{b}$ and $\bm{b}\leq\bm{a}\oplus\bm{b}$. The monotonicity of the scalar operations implies that vector addition and scalar multiplication are monotone in each argument.

To provide a clear visualisation of the key phenomena inherent in idempotent semimodules, we use graphical illustrations using the semimodule $\mathbb{R}_{\max,+}^{2}$. Any vector in $\mathbb{R}_{\max,+}^{2}$ is shown on the plane with a Cartesian coordinate system, which can be supplemented by two artificial points if needed. These points are placed before the left end of the horizontal axis and below the bottom of the vertical axis to schematically indicate the value $\mathbb{0}=-\infty$ required to represent zero coordinates of irregular vectors.

A geometric interpretation of the operations is given in Figures~\ref{F-VA} and \ref{F-SM}. The idempotent addition of two vectors with a common base point at the origin of the coordinate system follows a ``rectangle rule''. According to the rule, the sum is obtained as the upper right vertex of an upright finite (Figure~\ref{F-VA}, left) or infinite (Figure~\ref{F-VA}, right) rectangle that has vertices at the end points of the vectors that are being added.
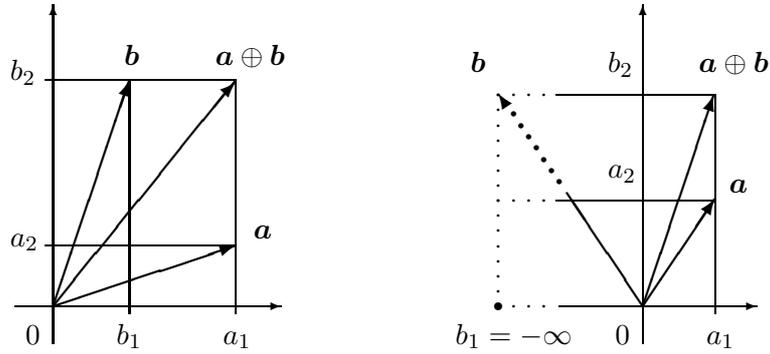
\begin{figure*}
\setlength{\unitlength}{1mm}
\begin{center}
\begin{picture}(35,45)

\put(0,5){\vector(1,0){35}}
\put(5,0){\vector(0,1){45}}

\put(5,5){\thicklines\vector(1,3){10}}
\put(15,35){\line(0,-1){31}}

\put(5,5){\thicklines\vector(3,1){24}}
\put(29,13){\line(-1,0){25}}

\put(5,5){\thicklines\line(4,5){24}}
\put(26,31.75){\thicklines\vector(1,1){3}}

\put(29,35){\line(-1,0){25}}

\put(29,35){\line(0,-1){31}}

\put(1,0){$0$}

\put(13,0){$b_{1}$}
\put(27,0){$a_{1}$}

\put(-1,13){$a_{2}$}
\put(-1,35){$b_{2}$}

\put(14,37){$\bm{b}$}

\put(31,14){$\bm{a}$}

\put(26,37){$\bm{a}\oplus\bm{b}$}

\end{picture}
\hspace{20\unitlength}
\begin{picture}(40,45)

\put(14,5){\vector(1,0){26}}
\put(25,0){\vector(0,1){45}}

\put(6,5){\circle*{1}}
\multiput(6,5)(2,0){4}{\circle*{.5}}

\put(25,5){\thicklines\line(-2,3){10}}
\multiput(14,21.5)(-1,1.5){8}{\circle*{.75}}

\put(8,30.25){\thicklines\vector(-2,3){2}}

\multiput(6,5)(0,2){15}{\circle*{.5}}

\multiput(6,33)(2,0){4}{\circle*{.5}}

\put(25,5){\thicklines\vector(2,3){9.5}}
\put(34.5,19){\line(-1,0){21}}
\multiput(6,19)(2,0){4}{\circle*{.5}}

\put(25,5){\thicklines\vector(1,3){9.3}}

\put(34.5,33){\line(-1,0){21}}

\put(34.5,33){\line(0,-1){29}}

\put(0,0){$b_{1}=-\infty$}

\put(21,0){$0$}

\put(33,0){$a_{1}$}

\put(20,22){$a_{2}$}
\put(20,36){$b_{2}$}

\put(2,36){$\bm{b}$}

\put(36,20){$\bm{a}$}

\put(32,36){$\bm{a}\oplus\bm{b}$}

\end{picture}
\end{center}
\caption{Vector addition of regular vectors (left) and of regular and irregular vectors (right).}
\label{F-VA}
\end{figure*}

The scalar multiplication of a vector by a nonzero scalar results in the moving of its end point along $45^{\circ}$ line to the coordinate axes for regular vectors (Figure~\ref{F-SM}, left) or along an artificial vertical (horizontal) line for irregular vectors (Figure~\ref{F-SM}, right).
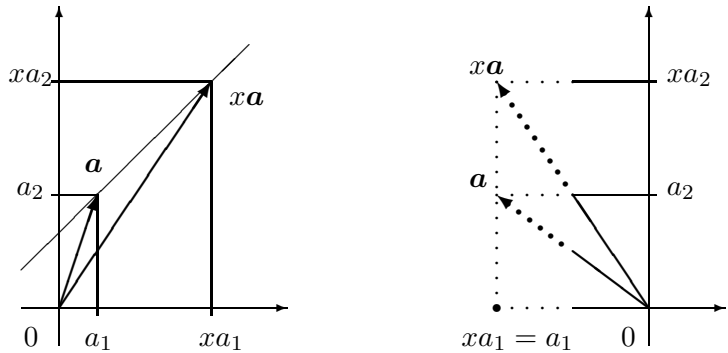
\begin{figure*}
\setlength{\unitlength}{1mm}
\begin{center}
\begin{picture}(35,45)

\put(0,5){\vector(1,0){35}}
\put(5,0){\vector(0,1){45}}

\put(5,5){\thicklines\vector(1,3){5}}
\put(10,20){\line(0,-1){16}}

\put(10,20){\line(-1,0){6}}

\put(5,5){\thicklines\vector(2,3){20}}
\put(25,35){\line(0,-1){31}}

\put(25,35){\line(-1,0){21}}

\put(0,10){\line(1,1){30}}

\put(0,0){$0$}
\put(8,23){$\bm{a}$}
\put(27,32){$x\bm{a}$}

\put(-1,20){$a_{2}$}
\put(-2,35){$xa_{2}$}

\put(8,0){$a_{1}$}
\put(23,0){$xa_{1}$}

\end{picture}
\hspace{20\unitlength}
\begin{picture}(35,45)

\put(15,5){\vector(1,0){20}}
\put(25,0){\vector(0,1){45}}

\put(5,5){\circle*{1}}
\multiput(5,5)(2,0){5}{\circle*{.5}}

\put(25,5){\thicklines\line(-2,3){10}}
\multiput(14,21.5)(-1,1.5){9}{\circle*{.75}}
\put(7,32){\thicklines\vector(-2,3){2}}

\multiput(5,5)(0,2){16}{\circle*{.5}}

\multiput(5,35)(2,0){6}{\circle*{.5}}

\put(25,5){\thicklines\line(-4,3){10}}
\multiput(13.5,13.5)(-1.4,1.05){6}{\circle*{.75}}
\put(7,18.5){\thicklines\vector(-4,3){2}}

\put(26,20){\line(-1,0){11}}
\multiput(5,20)(2,0){6}{\circle*{.5}}

\put(26,35){\line(-1,0){11}}

\put(0,0){$xa_{1}=a_{1}$}

\put(21,0){$0$}

\put(27,20){$a_{2}$}
\put(27,35){$xa_{2}$}

\put(1,36){$x\bm{a}$}
\put(1,21){$\bm{a}$}

\end{picture}
\end{center}
\caption{Scalar multiplication of a regular vector (left) and of an irregular vector (right).}
\label{F-SM}
\end{figure*}

As usual, a vector $\bm{b}$ is said to be linearly dependent on vectors $\bm{a}_{1},\ldots,\bm{a}_{m}$ if $\bm{b}=x_{1}\bm{a}_{1}\oplus\cdots\oplus x_{m}\bm{a}_{m}$ for some scalars $x_{1},\ldots,x_{m}$. Specifically, two vectors $\bm{a}$ and $\bm{b}$ are collinear if there is a scalar $x$ such that $\bm{b}=x\bm{a}$.

Given vectors $\bm{a}_{1},\ldots,\bm{a}_{m}$, the linear combinations $x_{1}\bm{a}_{1}\oplus\cdots\oplus x_{m}\bm{a}_{m}$ for all $x_{1},\ldots,x_{m}$ form a linear span of the vectors that constitutes an idempotent subsemimodule. Graphical examples of the linear span in $\mathbb{R}_{\max,+}^{2}$ are shown in Figure~\ref{F-LS} as areas that are surrounded by hatched borders.

The linear span of two regular vectors is a strip bounded by the lines drawn through the end points of the vectors (Figure~\ref{F-LS}, left). When one of the vectors is irregular, the strip extends to take the form of a half-plane (Figure~\ref{F-LS}, right).
\begin{figure*}
\setlength{\unitlength}{1mm}
\begin{center}
\begin{picture}(45,40)

\put(0,5){\vector(1,0){45}}
\put(10,0){\vector(0,1){45}}

\put(10,5){\thicklines\vector(-1,2){6}}
\put(10,5){\thicklines\vector(1,3){9.3}}

\put(10,5){\thicklines\vector(4,1){14}}
\put(10,5){\thicklines\vector(2,1){21}}

\put(0.5,14){\thicklines\line(1,1){25}}
\multiput(1.5,15)(1,1){24}{\line(1,0){1}}

\put(15.5,0){\thicklines\line(1,1){25}}
\multiput(16.5,1)(1,1){24}{\line(0,1){1}}

\put(31,33){\line(-1,0){21}}
\put(31,33){\line(0,-1){28}}

\put(10,5){\thicklines\vector(3,4){21}}

\put(5,0){$0$}

\put(1,21){$\bm{a}_{2}$}
\put(13,36){$x_{2}\bm{a}_{2}$}

\put(26,7){$\bm{a}_{1}$}
\put(34,15){$x_{1}\bm{a}_{1}$}

\put(26,36){$x_{1}\bm{a}_{1}\oplus x_{2}\bm{a}_{2}$}

\end{picture}
\hspace{15\unitlength}
\begin{picture}(50,45)

\put(15,5){\vector(1,0){35}}
\put(25,0){\vector(0,1){45}}

\put(6,5){\circle*{1}}
\multiput(6,5)(2.2,0){4}{\circle*{.5}}

\put(25,5){\thicklines\line(-2,3){10}}
\multiput(14,21.5)(-1,1.5){7}{\circle*{.75}}
\put(8,30){\thicklines\vector(-2,3){2}}

\multiput(6,5)(0,2){15}{\circle*{.75}}

\multiput(6,33)(2.2,0){4}{\circle*{.5}}

\put(25,5){\thicklines\line(-4,3){10}}
\multiput(13.5,13.5)(-1.4,1.05){4}{\circle*{.75}}
\put(8,17.5){\thicklines\vector(-4,3){2}}


\put(39,33){\line(-1,0){24}}

\put(39,33){\line(0,-1){28}}


\put(25,5){\thicklines\vector(3,-1){7}}

\put(25,5){\thicklines\vector(3,1){14}}

\put(25,5){\thicklines\vector(1,2){14}}

\put(29.5,0){\thicklines\line(1,1){16}}
\multiput(30.5,1)(1,1){15}{\line(0,1){1}}


\put(21,0){$0$}


\put(1,20){$\bm{a}_{2}$}
\put(0,36){$x_{2}\bm{a}_{2}$}

\put(33,0){$\bm{a}_{1}$}
\put(41,8){$x_{1}\bm{a}_{1}$}

\put(32,36){$x_{1}\bm{a}_{1}\oplus x_{2}\bm{a}_{2}$}

\end{picture}
\end{center}
\caption{Linear span of regular vectors (left) and of regular and irregular vectors (right).}
\label{F-LS}
\end{figure*}
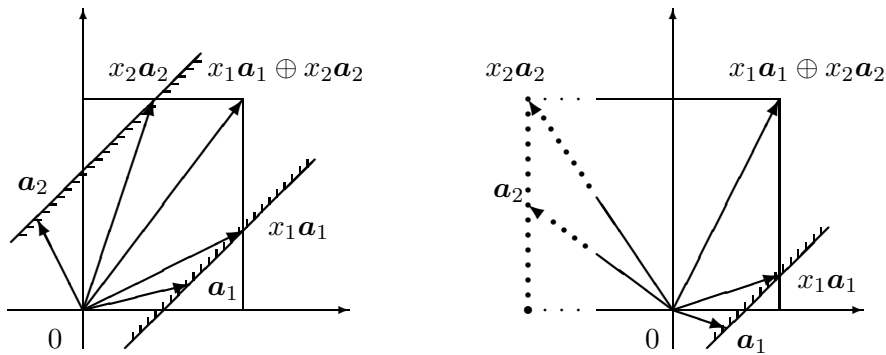

For any nonzero column vector $\bm{a}=(a_{i})$, we define its multiplicative conjugate transpose as a row vector $\bm{a}^{-}=(a_{i}^{-})$ with components $a_{i}^{-}=a_{i}^{-1}$ if $a_{i}>\mathbb{0}$, and $a_{i}^{-}=\mathbb{0}$ otherwise. If both vectors $\bm{a}$ and $\bm{b}$ are regular, then the component-wise inequality $\bm{a}\leq\bm{b}$ implies that $\bm{a}^{-}\geq\bm{b}^{-}$.

\subsection{Matrix algebra}

Consider matrices with entries from $\mathbb{X}$. For any conforming matrices $\bm{A}=(a_{ij})$, $\bm{B}=(b_{ij})$, and $\bm{C}=(c_{ij})$ and scalar $x\in\mathbb{X}$, matrix addition and multiplication and scalar multiplication follow the element-wise formulas
$$
\{\bm{A}\oplus\bm{B}\}_{ij}
=
a_{ij}
\oplus
b_{ij},
\qquad
\{\bm{A}\bm{C}\}_{ij}
=
\bigoplus_{k}a_{ik}c_{kj},
\qquad
\{x\bm{A}\}_{ij}
=
xa_{ij}.
$$

Specifically, the multiplication of a matrix $\bm{A}=(a_{ij})$ by a vector $\bm{x}=(x_{i})$ of order $n$ gives a vector with components
$$
\{\bm{A}\bm{x}\}_{i}
=
a_{i1}x_{1}\oplus\cdots\oplus a_{in}x_{n}.
$$

The matrix operations are element-wise monotone in each argument. If $\bm{A}$ and $\bm{B}$ are matrices of the same size, then $\bm{A}\leq\bm{A}\oplus\bm{B}$ and $\bm{B}\leq\bm{A}\oplus\bm{B}$.

A matrix that has only zero entries is the zero matrix represented by $\mathbb{0}$.

Consider square matrices of order $n$ over $\mathbb{X}$, and denote the set of matrices $\mathbb{X}^{n\times n}$. Any matrix with all off-diagonal entries equal to $\mathbb{0}$ is a diagonal matrix. A diagonal matrix with $\mathbb{1}$ along the diagonal is the identity matrix, which is denoted $\bm{I}$.

The matrix power is introduced in a regular manner. For any matrix $\bm{A}\in\mathbb{X}^{n\times n}$ and integer $p\geq1$, $\bm{A}^{0}=\bm{I}$ and $\bm{A}^{p}=\bm{A}^{p-1}\bm{A}=\bm{A}\bm{A}^{p-1}$.

If all of the entries of a matrix above or below the diagonal are zero, the matrix is triangular. A triangular matrix with zero diagonal entries is strictly triangular.

A matrix is reducible if simultaneous permutations of its rows and columns can transform it into a block-triangular normal form and irreducible otherwise. The lower triangular normal form of a matrix $\bm{A}$ is given by
\begin{equation}\label{E-MNF}
\bm{A}
=
\left(
	\begin{array}{cccc}
		\bm{A}_{11}	& \mathbb{0}	& \ldots	& \mathbb{0} \\
		\bm{A}_{21}	& \bm{A}_{22}	&					& \mathbb{0} \\
		\vdots			& \vdots			& \ddots	& \\
		\bm{A}_{s1}	& \bm{A}_{s2}	& \ldots	&	\bm{A}_{ss}
	\end{array}
\right),
\end{equation}
where $\bm{A}_{ii}$ is either irreducible or a zero matrix of order $n_{i}$ and $\bm{A}_{ij}$ is an arbitrary matrix of size $n_{i}\times n_{j}$ for all $i=1,\ldots,s$, $j<i$, and $n_{1}+\cdots+n_{s}=n$.

The trace of any matrix $\bm{A}=(a_{ij})$ is routinely defined as
$$
\mathop\mathrm{tr}\bm{A}
=
a_{11}\oplus\cdots\oplus a_{nn}.
$$

For any matrices $\bm{A}$ and $\bm{B}$, and scalar $x$, the following equalities hold:
$$
\mathop\mathrm{tr}(\bm{A}\oplus\bm{B})
=
\mathop\mathrm{tr}\bm{A}
\oplus
\mathop\mathrm{tr}\bm{B},
\qquad
\mathop\mathrm{tr}(\bm{A}\bm{B})
=
\mathop\mathrm{tr}(\bm{B}\bm{A}),
\qquad
\mathop\mathrm{tr}(x\bm{A})
=
x\mathop\mathrm{tr}(\bm{A}).
$$

The application of these properties leads, in particular, to a binomial identity for traces derived in \cite{Krivulin2012Atropical} in the form
\begin{equation}
\mathop\mathrm{tr}(\bm{A}\oplus\bm{B})^{m}
=
\bigoplus_{k=1}^{m}\mathop{\bigoplus\hspace{2.3em}}_{i_{1}+\cdots+i_{k}=m-k}\mathop\mathrm{tr}(\bm{A}\bm{B}^{i_{1}}\cdots\bm{A}\bm{B}^{i_{k}})
\oplus
\mathop\mathrm{tr}\bm{B}^{m},
\label{E-trABm}
\end{equation}
which is valid for any non-negative integer $m$.

As usual, a scalar $\lambda$ is an eigenvalue of a matrix $\bm{A}$ if there exists a nonzero vector $\bm{x}$ such that
$$
\bm{A}\bm{x}
=
\lambda\bm{x}.
$$

Every irreducible matrix has only one eigenvalue, whereas reducible matrices may possess several eigenvalues. The maximal eigenvalue (in the sense of the order on $\mathbb{X}$) is called the spectral radius of $\bm{A}$ and is directly calculated as
$$
\lambda
=
\bigoplus_{m=1}^{n}\mathop\mathrm{tr}\nolimits^{1/m}(\bm{A}^{m}).
$$

The spectral radius $\lambda$ of any matrix $\bm{A}$ offers a useful extremal property that was considered by \cite{Cuninghamegreen1979Minimax,Krivulin2005Evaluation}, which stated that
$$
\min\ \bm{x}^{-}\bm{A}\bm{x}
=
\lambda,
$$
where the minimum is taken over all regular vectors $\bm{x}$.

\section{Explicit solution to a linear inequality}

Explicit results to the optimisation problems that will be derived in the next section make use of direct solutions to the following problem. Given a matrix $\bm{A}\in\mathbb{X}^{n\times n}$ and vector $\bm{b}\in\mathbb{X}^{n}$, find all regular vectors $\bm{x}\in\mathbb{X}^{n}_{+}$ that satisfy the inequality
\begin{equation}
\bm{A}\bm{x}\oplus\bm{b}
\leq
\bm{x}.
\label{I-Axbx}
\end{equation}

The purpose of this section is to derive the solution to inequality \eqref{I-Axbx} in the form of an expression that describes a general vector in the solution set, and which is usually referred to as the general or complete solution.

An equation with the same terms as in \eqref{I-Axbx} is frequently referred to as the non-homogeneous Bellman equation. This equation has been examined in many works \cite{Carre1971Analgebra,Carre1979Graphs,Zimmermann1981Linear,Gondran1984Linear,Gondran2008Graphs,Mahr1984Iteration,Baccelli1993Synchronization,Kolokoltsov1997Idempotent}, whereas the solution to the inequality appears to have received less attention in the literature.

Complete solutions to the inequality with both irreducible and reducible matrices can be found in \cite{Krivulin2006Solution,Krivulin2009Methods}, where they were obtained by reducing the inequality to an equation. Below, we concentrate on regular solutions in the case of general reducible matrices and offer a direct proof to obtain an explicit solution in a slightly different and more compact form.

For any matrix $\bm{A}$ of order $n$, we define a function that is given by
$$
\mathop\mathrm{Tr}(\bm{A})
=
\mathop\mathrm{tr}\bm{A}\oplus\cdots\oplus\mathop\mathrm{tr}\bm{A}^{n},
$$
and a star operator that takes $\bm{A}$ to a matrix
$$
\bm{A}^{\ast}
=
\bm{I}\oplus\bm{A}\oplus\cdots\oplus\bm{A}^{n-1}.
$$

We use the function to reformulate the condition in a known result that was apparently first suggested in \cite{Carre1971Analgebra} and is referred to below as the Carr{\'e} inequality. The result states that any matrix $\bm{A}$ with $\mathop\mathrm{Tr}(\bm{A})\leq\mathbb{1}$ satisfies the inequality
$$
\bm{A}^{k}
\leq
\bm{A}^{\ast},
\qquad
k
\geq
0.
$$

For irreducible matrices, all regular solutions to \eqref{I-Axbx} can be described as follows (see \cite{Krivulin2006Solution,Krivulin2009Methods} for the proof and further details).
\begin{lemma}\label{L-IAxbx}
Let $\bm{x}$ be the complete regular solution to inequality \eqref{I-Axbx} with an irreducible matrix $\bm{A}$. Then, the following statements hold:
\begin{enumerate}
\item If $\mathop\mathrm{Tr}(\bm{A})\leq\mathbb{1}$, then $\bm{x}=\bm{A}^{\ast}\bm{u}$ for all regular vectors $\bm{u}\geq\bm{b}$.
\item If $\mathop\mathrm{Tr}(\bm{A})>\mathbb{1}$, then there is no regular solution.
\end{enumerate}
\end{lemma}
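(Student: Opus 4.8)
The plan is to begin by observing that, because idempotent addition is monotone and satisfies the extremal property, inequality \eqref{I-Axbx} is equivalent to the pair of simultaneous inequalities $\bm{A}\bm{x}\leq\bm{x}$ and $\bm{b}\leq\bm{x}$. The forward implication follows from $\bm{A}\bm{x}\leq\bm{A}\bm{x}\oplus\bm{b}$ and $\bm{b}\leq\bm{A}\bm{x}\oplus\bm{b}$, while the converse is immediate from the definition of the order. I would then treat the two statements of the lemma separately, handling the nonexistence claim first.

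For statement 2, I would obtain a necessary condition for solvability from the extremal property of the spectral radius. If $\bm{x}$ is a regular solution, then $\bm{A}\bm{x}\leq\bm{x}$ gives componentwise $x_{i}^{-1}\{\bm{A}\bm{x}\}_{i}\leq\mathbb{1}$, so that $\bm{x}^{-}\bm{A}\bm{x}\leq\mathbb{1}$; combining this with the bound $\bm{x}^{-}\bm{A}\bm{x}\geq\lambda$ forces $\lambda\leq\mathbb{1}$. It then remains to convert the hypothesis $\mathop\mathrm{Tr}(\bm{A})>\mathbb{1}$ into $\lambda>\mathbb{1}$: since $\mathop\mathrm{Tr}(\bm{A})>\mathbb{1}$ yields $\mathop\mathrm{tr}(\bm{A}^{m})>\mathbb{1}$ for some $m\leq n$, the spectral radius formula gives $\lambda\geq\mathop\mathrm{tr}\nolimits^{1/m}(\bm{A}^{m})>\mathbb{1}$, contradicting the bound just obtained. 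Hence no regular solution can exist.

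For statement 1, in the case $\mathop\mathrm{Tr}(\bm{A})\leq\mathbb{1}$, I would prove the two inclusions. To show that every $\bm{x}=\bm{A}^{\ast}\bm{u}$ with regular $\bm{u}\geq\bm{b}$ is a regular solution, I note that $\bm{A}^{\ast}\geq\bm{I}$ gives $\bm{x}\geq\bm{u}\geq\bm{b}$, so $\bm{x}$ is regular and satisfies $\bm{b}\leq\bm{x}$; the remaining inequality $\bm{A}\bm{x}\leq\bm{x}$ reduces to $\bm{A}\bm{A}^{\ast}\leq\bm{A}^{\ast}$, and here the Carr\'e inequality is the crucial ingredient, since $\bm{A}\bm{A}^{\ast}=\bm{A}\oplus\cdots\oplus\bm{A}^{n}$ and the only summand not already contained in $\bm{A}^{\ast}$ is $\bm{A}^{n}$, which is bounded by $\bm{A}^{\ast}$ precisely when $\mathop\mathrm{Tr}(\bm{A})\leq\mathbb{1}$. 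Conversely, given any regular solution $\bm{x}$, iterating $\bm{A}\bm{x}\leq\bm{x}$ yields $\bm{A}^{k}\bm{x}\leq\bm{x}$ for all $k\geq0$, hence $\bm{A}^{\ast}\bm{x}\leq\bm{x}$; together with $\bm{A}^{\ast}\bm{x}\geq\bm{x}$ this gives the fixed-point identity $\bm{A}^{\ast}\bm{x}=\bm{x}$, so $\bm{x}=\bm{A}^{\ast}\bm{u}$ with the admissible choice $\bm{u}=\bm{x}\geq\bm{b}$.

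The computations are routine once the right tools are identified; the main obstacle is conceptual rather than technical. It lies in recognising that the Carr\'e inequality is exactly what controls the ``overflow'' term $\bm{A}^{n}$ arising in the product $\bm{A}\bm{A}^{\ast}$, and that the single threshold condition $\mathop\mathrm{Tr}(\bm{A})\leq\mathbb{1}$ does double duty: it guarantees this bound on the constructive side, and, through its equivalence with $\lambda\leq\mathbb{1}$, it rules out any regular solution whenever it fails.
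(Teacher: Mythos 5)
Your proof is correct. Note, however, that the paper does not prove Lemma~\ref{L-IAxbx} at all: it imports the result from \cite{Krivulin2006Solution,Krivulin2009Methods}, where it is obtained by reducing the inequality to the associated non-homogeneous Bellman equation. Your argument is a clean, self-contained alternative: splitting $\bm{A}\bm{x}\oplus\bm{b}\leq\bm{x}$ into $\bm{A}\bm{x}\leq\bm{x}$ and $\bm{b}\leq\bm{x}$, using the extremal property $\min\bm{x}^{-}\bm{A}\bm{x}=\lambda$ together with $\lambda\leq\mathbb{1}\Leftrightarrow\mathop\mathrm{Tr}(\bm{A})\leq\mathbb{1}$ for nonexistence, the Carr\'e bound $\bm{A}^{n}\leq\bm{A}^{\ast}$ for sufficiency, and the fixed-point identity $\bm{A}^{\ast}\bm{x}=\bm{x}$ with $\bm{u}=\bm{x}$ for completeness. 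The one point worth flagging is more interesting than a flaw: nothing in your argument uses irreducibility, since the extremal property of the spectral radius, the trace formula for $\lambda$, and the Carr\'e inequality are all stated in the paper for arbitrary matrices. You have therefore in effect proved Theorem~\ref{T-Axbx} directly, whereas the paper obtains the reducible case by a considerably longer route --- passing to the block-triangular normal form \eqref{E-MNF}, solving block by block via Lemma~\ref{L-IAxbx}, iterating the resulting implicit equation, and reassembling the answer with the identity $(\bm{A}\oplus\bm{B})^{\ast}=(\bm{A}^{\ast}\bm{B})^{\ast}\bm{A}^{\ast}$. Your approach buys brevity and generality at the price of leaning on the extremal property of $\lambda$ for general (reducible) matrices as a black box; the paper's decomposition avoids that dependence but costs an auxiliary proposition and a page of bookkeeping. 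A minor phrasing quibble: ``bounded by $\bm{A}^{\ast}$ precisely when $\mathop\mathrm{Tr}(\bm{A})\leq\mathbb{1}$'' overstates what the Carr\'e inequality gives --- only the ``if'' direction is available or needed there.
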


Figure~\ref{F-GSIAxbx} offers examples of solutions to inequalities that have a common matrix $\bm{A}=(\bm{a}_{1},\bm{a}_{2})$ and different vectors $\bm{b}$ in $\mathbb{R}_{\max,+}^{2}$. The set of solutions is indicated by semi-infinite regions with hatched borders. Together with the columns of the matrix $\bm{A}$, those of $\bm{A}^{\ast}=(\bm{a}_{1}^{\ast},\bm{a}_{2}^{\ast})$ are also presented.
\begin{figure}
\setlength{\unitlength}{1mm}
\begin{center}
\begin{picture}(50,45)

\put(0,15){\vector(1,0){50}}
\put(20,0){\vector(0,1){45}}

\put(20,15){\thicklines\vector(-1,0){10}}
\put(6,11){\thicklines\line(1,1){30}}
\multiput(26,31)(1,1){10}{\line(1,0){1}}

\put(20,15){\thicklines\vector(-1,-4){3}}

\put(20,15){\thicklines\vector(0,-1){12}}

\put(15,1){\line(1,1){31}}

\put(17,0){\thicklines\line(1,1){30}}
\multiput(40,23)(1,1){7}{\line(0,1){1}}

\put(20,15){\thicklines\vector(2,3){5}}

\put(25,22){\thicklines\line(0,1){8}}
\multiput(25,22)(0,1){9}{\line(1,0){1}}

\put(25,22){\thicklines\line(1,0){14}}
\multiput(25,22)(1,0){15}{\line(0,1){1}}

\put(20,15){\thicklines\vector(4,3){24}}

\put(44,33){\line(0,-1){19}}

\put(44,33){\line(-1,0){25}}

\put(0,20){$\bm{a}_{2}=\bm{a}_{2}^{\ast}$}
\put(11,3){$\bm{a}_{1}$}
\put(23,1){$\bm{a}_{1}^{\ast}$}

\put(43,35){$\bm{x}$}

\put(27,25){$\bm{b}$}

\put(43,11){$x_{1}$}

\put(14,33){$x_{2}$}

\put(16,11){$0$}

\end{picture}
\hspace{15\unitlength}
\begin{picture}(50,45)

\put(0,15){\vector(1,0){50}}
\put(20,0){\vector(0,1){45}}

\put(20,15){\thicklines\vector(-1,0){10}}
\put(6,11){\thicklines\line(1,1){30}}
\multiput(22,27)(1,1){14}{\line(1,0){1}}

\put(20,15){\thicklines\vector(-1,-4){3}}

\put(20,15){\thicklines\vector(0,-1){12}}

\put(15,1){\line(1,1){31}}

\put(17,0){\thicklines\line(1,1){30}}
\multiput(44,27)(1,1){3}{\line(0,1){1}}

\put(20,15){\thicklines\vector(-2,3){7.25}}

\put(12.5,26){\line(1,0){30.5}}
\put(21,26){\thicklines\line(1,0){22}}
\multiput(22,26)(1,0){22}{\line(0,1){1}}

\put(20,15){\thicklines\vector(3,4){18}}

\put(38,39){\line(0,-1){25}}

\put(38,39){\line(-1,0){19}}

\put(0,20){$\bm{a}_{2}=\bm{a}_{2}^{\ast}$}

\put(11,3){$\bm{a}_{1}$}
\put(23,1){$\bm{a}_{1}^{\ast}$}

\put(40,38){$\bm{x}$}

\put(10,27){$\bm{b}$}

\put(37,11){$x_{1}$}

\put(14,39){$x_{2}$}

\put(16,11){$0$}

\end{picture}
\end{center}
\caption{Solution to linear inequalities with an irreducible matrix.}
\label{F-GSIAxbx}
\end{figure}
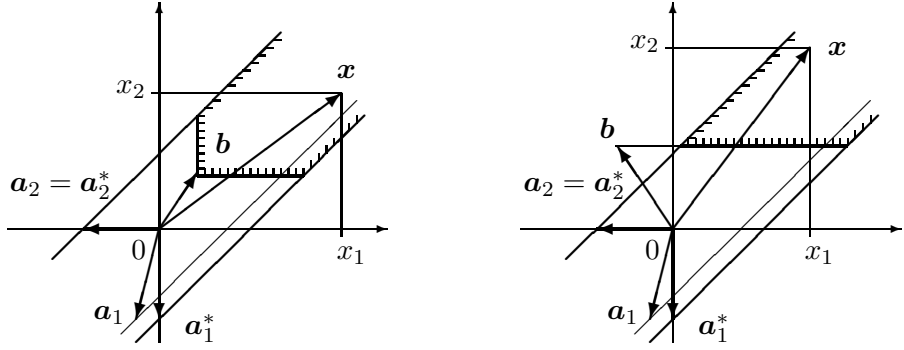

In the rest of this section, we show that the result of Lemma~\ref{L-IAxbx} can be directly extended to solve inequality \eqref{I-Axbx} with irreducible matrices. We start with an auxiliary result that provides a useful representation for $(\bm{A}\oplus\bm{B})^{\ast}$ and can be considered an analogue of an identity that appears in \cite{Carre1979Graphs}.
\begin{proposition}
If $\bm{A}$ and $\bm{B}$ are matrices such that $\mathop\mathrm{Tr}(\bm{A}\oplus\bm{B})\leq\mathbb{1}$, then
\begin{equation}\label{E-ABABA}
(\bm{A}\oplus\bm{B})^{\ast}
=
(\bm{A}^{\ast}\bm{B})^{\ast}\bm{A}^{\ast}.
\end{equation}
\end{proposition}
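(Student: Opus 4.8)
The plan is to prove \eqref{E-ABABA} by expanding both sides into joins of matrix products (``words'') in $\bm{A}$ and $\bm{B}$ and checking that the two joins range over exactly the same set of words. The enabling fact is that, under the hypothesis, every star operator occurring here coincides with the \emph{stabilised} join of all powers of its argument, so that the defining truncation at exponent $n-1$ discards nothing.

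First I would draw the monotonicity consequences of the hypothesis. From $\bm{A}\leq\bm{A}\oplus\bm{B}$ and $\bm{B}\leq\bm{A}\oplus\bm{B}$, the element-wise monotonicity of multiplication and of the trace yields $\mathop\mathrm{Tr}(\bm{A})\leq\mathop\mathrm{Tr}(\bm{A}\oplus\bm{B})\leq\mathbb{1}$ and, likewise, $\mathop\mathrm{Tr}(\bm{B})\leq\mathbb{1}$. Hence the Carr\'e inequality applies to $\bm{A}$, $\bm{B}$, and $\bm{A}\oplus\bm{B}$, giving $\bm{A}^{k}\leq\bm{A}^{\ast}$, $\bm{B}^{k}\leq\bm{B}^{\ast}$, and $(\bm{A}\oplus\bm{B})^{k}\leq(\bm{A}\oplus\bm{B})^{\ast}$ for every $k\geq0$. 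In particular $\bm{A}^{\ast}=\bigoplus_{i\geq0}\bm{A}^{i}$ as a stabilised join, and $(\bm{A}\oplus\bm{B})^{\ast}$ equals the join of $(\bm{A}\oplus\bm{B})^{m}$ over all $m\geq0$; expanding each power by distributivity, $(\bm{A}\oplus\bm{B})^{\ast}=\bigoplus_{\bm{w}}\bm{w}$, the join of $\bm{w}$ over all words $\bm{w}$ in the alphabet $\{\bm{A},\bm{B}\}$.

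The step I expect to be the main obstacle is extending the Carr\'e inequality to the matrix $\bm{A}^{\ast}\bm{B}$, that is, proving $\mathop\mathrm{Tr}(\bm{A}^{\ast}\bm{B})\leq\mathbb{1}$, since this is what makes $(\bm{A}^{\ast}\bm{B})^{\ast}$ stabilise. I would obtain it from a uniform bound on traces of words. For every $m\geq1$, Carr\'e gives $(\bm{A}\oplus\bm{B})^{m}\leq(\bm{A}\oplus\bm{B})^{\ast}$, hence
$$
\mathop\mathrm{tr}((\bm{A}\oplus\bm{B})^{m})
\leq
\mathop\mathrm{tr}((\bm{A}\oplus\bm{B})^{\ast})
=
\mathbb{1},
$$
the last equality holding because $\mathop\mathrm{tr}((\bm{A}\oplus\bm{B})^{\ast})$ is $\mathop\mathrm{tr}\bm{I}=\mathbb{1}$ joined with lower traces that are bounded by $\mathop\mathrm{Tr}(\bm{A}\oplus\bm{B})\leq\mathbb{1}$. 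Since any word $\bm{w}$ of length $m$ is one $\oplus$-summand in the expansion of $(\bm{A}\oplus\bm{B})^{m}$, we have $\bm{w}\leq(\bm{A}\oplus\bm{B})^{m}$ entry-wise and therefore $\mathop\mathrm{tr}\bm{w}\leq\mathbb{1}$ (the binomial identity \eqref{E-trABm} together with the cyclic law $\mathop\mathrm{tr}(\bm{X}\bm{Y})=\mathop\mathrm{tr}(\bm{Y}\bm{X})$ supplies an explicit enumeration of these word traces, should one prefer it). Now $(\bm{A}^{\ast}\bm{B})^{k}=\bigoplus_{i_{1},\dots,i_{k}\geq0}\bm{A}^{i_{1}}\bm{B}\cdots\bm{A}^{i_{k}}\bm{B}$ is a join of words, each of trace $\leq\mathbb{1}$, so $\mathop\mathrm{tr}((\bm{A}^{\ast}\bm{B})^{k})\leq\mathbb{1}$ for all $k$ and hence $\mathop\mathrm{Tr}(\bm{A}^{\ast}\bm{B})\leq\mathbb{1}$. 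Carr\'e then gives $(\bm{A}^{\ast}\bm{B})^{k}\leq(\bm{A}^{\ast}\bm{B})^{\ast}$ for every $k$.

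With every star now stabilised, I would finish by a direct expansion. Using $\bm{A}^{\ast}=\bigoplus_{i\geq0}\bm{A}^{i}$ and distributivity,
$$
(\bm{A}^{\ast}\bm{B})^{\ast}\bm{A}^{\ast}
=
\bigoplus_{k\geq0}\ \bigoplus_{i_{1},\dots,i_{k},\,j\geq0}
\bm{A}^{i_{1}}\bm{B}\bm{A}^{i_{2}}\bm{B}\cdots\bm{A}^{i_{k}}\bm{B}\bm{A}^{j}.
$$
Every word in $\{\bm{A},\bm{B}\}$ has a unique factorisation of this shape: split it at each occurrence of $\bm{B}$ and read off the lengths $i_{1},\dots,i_{k}$ of the $\bm{A}$-blocks before and between the $\bm{B}$'s and the trailing length $j$, the case $k=0$ producing the pure powers $\bm{A}^{j}$. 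Thus the right-hand join ranges over exactly the same words as $\bigoplus_{\bm{w}}\bm{w}=(\bm{A}\oplus\bm{B})^{\ast}$, and by idempotency of $\oplus$ the two sides are equal, which proves \eqref{E-ABABA}. The only points demanding care are the justification that the displayed stabilised joins are genuinely finite and settle at exponent $n-1$ (guaranteed by the Carr\'e inequalities established above) and the elementary uniqueness of the word factorisation.
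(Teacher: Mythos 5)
Your proof is correct, and it follows the same basic strategy as the paper --- expand both sides into joins of words in $\bm{A}$ and $\bm{B}$ and compare --- but you organise the comparison differently, and the difference costs you an extra lemma. The paper proves the two inequalities asymmetrically: for $(\bm{A}\oplus\bm{B})^{\ast}\leq(\bm{A}^{\ast}\bm{B})^{\ast}\bm{A}^{\ast}$ it merely observes that every word occurring in the truncated expansion of the left side has total length at most $n-1$, hence at most $n-1$ factors $\bm{B}$ and $\bm{A}$-blocks of length at most $n-1$, and therefore literally appears as a summand in the finite expansion of the right side; no Carr\'e-type estimate is needed for that direction. For the reverse inequality the paper majorises $\bm{A}$ and $\bm{B}$ by $\bm{C}=\bm{A}\oplus\bm{B}$, so that $(\bm{A}^{\ast}\bm{B})^{\ast}\bm{A}^{\ast}\leq(\bm{C}^{\ast}\bm{C})^{\ast}\bm{C}^{\ast}=\bigoplus_{m=0}^{n^{2}-1}\bm{C}^{m}=\bm{C}^{\ast}$, invoking the Carr\'e inequality only for $\bm{C}$. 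You instead identify both sides with the stabilised join of \emph{all} words, which forces you to show that arbitrarily long words --- in particular words with more than $n-1$ occurrences of $\bm{B}$ --- are dominated by the right-hand side; that is exactly why you must establish $\mathop\mathrm{Tr}(\bm{A}^{\ast}\bm{B})\leq\mathbb{1}$ so that Carr\'e applies to $\bm{A}^{\ast}\bm{B}$. Your proof of that auxiliary bound (every word of positive length is dominated entry-wise by the corresponding power of $\bm{C}$, whose trace is at most $\mathbb{1}$) is sound, and the bound is a genuine by-product that the paper's argument does not yield; the price is a longer route to a statement that the paper closes in one direction by a bare containment of index sets.
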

\begin{proof}
To verify the identity, it is sufficient to show that two opposite inequalities between its left  and right sides are valid. By expanding both the left and the right parts of \eqref{E-ABABA} and rearranging the terms, we obtain one of the inequalities
\begin{multline*}
(\bm{A}\oplus\bm{B})^{\ast}
=
\bigoplus_{m=0}^{n-1}\mathop{\bigoplus\hspace{1.6em}}_{0\leq i_{0}+i_{1}+\cdots+i_{m}\leq n-m-1}\bm{A}^{i_{0}}(\bm{B}\bm{A}^{i_{1}})\cdots(\bm{B}\bm{A}^{i_{m}})
\\
\leq
\bigoplus_{m=0}^{n-1}\mathop{\bigoplus\hspace{0.5em}}_{0\leq i_{0},i_{1},\ldots,i_{m}\leq n-1}\bm{A}^{i_{0}}(\bm{B}\bm{A}^{i_{1}})\cdots(\bm{B}\bm{A}^{i_{m}})
=
(\bm{A}^{\ast}\bm{B})^{\ast}\bm{A}^{\ast}.
\end{multline*}

Furthermore, we denote $\bm{C}=\bm{A}\oplus\bm{B}$ and find that $\bm{A}\leq\bm{C}$ and $\bm{B}\leq\bm{C}$. The application of the Carr{\'e} inequality gives
$$
(\bm{A}^{\ast}\bm{B})^{\ast}\bm{A}^{\ast}
\leq
(\bm{C}^{\ast}\bm{C})^{\ast}\bm{C}^{\ast}
=
\bigoplus_{m=0}^{n^{2}-1}\bm{C}^{m}
=
\bm{C}^{\ast}
=
(\bm{A}\oplus\bm{B})^{\ast},
$$
which shows that the opposite inequality is also true.
\end{proof}

We are now in a position to present the main result of the section.

\begin{theorem}\label{T-Axbx}
Let $\bm{x}$ be the complete regular solution to inequality \eqref{I-Axbx} with an arbitrary matrix $\bm{A}$. Then the following statements hold:
\begin{enumerate}
\item If $\mathop\mathrm{Tr}(\bm{A})\leq\mathbb{1}$, then $\bm{x}=\bm{A}^{\ast}\bm{u}$ for all regular vectors $\bm{u}\geq\bm{b}$.
\item If $\mathop\mathrm{Tr}(\bm{A})>\mathbb{1}$, then there is no regular solution.
\end{enumerate}
\end{theorem}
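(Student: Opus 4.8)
The plan is to prove both statements directly from the Carr\'e inequality together with the elementary monotonicity facts for the star operator, rather than by reducing $\bm{A}$ to its block-triangular normal form. The crucial observation is that the Carr\'e inequality, as stated above, already holds for an \emph{arbitrary} matrix $\bm{A}$ with $\mathop\mathrm{Tr}(\bm{A})\leq\mathbb{1}$, so the argument that proves Lemma~\ref{L-IAxbx} in the irreducible case should carry over essentially verbatim. In fact the preceding Proposition can be bypassed on this route; it offers an alternative decomposition-based derivation that writes $\bm{A}=\bm{A}_{D}\oplus\bm{A}_{L}$ (diagonal blocks plus strictly lower part) in \eqref{E-MNF}, uses $\bm{A}^{\ast}=(\bm{A}_{D}^{\ast}\bm{A}_{L})^{\ast}\bm{A}_{D}^{\ast}$, and applies Lemma~\ref{L-IAxbx} to the irreducible diagonal blocks.

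For statement~1 I would show that the set of regular solutions of \eqref{I-Axbx} coincides with $\{\bm{A}^{\ast}\bm{u}:\bm{u}\geq\bm{b},\ \bm{u}\ \text{regular}\}$ by proving two inclusions. For $\supseteq$, substitute $\bm{x}=\bm{A}^{\ast}\bm{u}$ and compute $\bm{A}\bm{x}=(\bm{A}\oplus\cdots\oplus\bm{A}^{n})\bm{u}$; since $\mathop\mathrm{Tr}(\bm{A})\leq\mathbb{1}$, the Carr\'e inequality gives $\bm{A}^{k}\leq\bm{A}^{\ast}$ for every $k$, in particular $\bm{A}^{n}\leq\bm{A}^{\ast}$, whence $\bm{A}\bm{x}\leq\bm{A}^{\ast}\bm{u}=\bm{x}$, while $\bm{A}^{\ast}\geq\bm{I}$ yields $\bm{b}\leq\bm{u}\leq\bm{A}^{\ast}\bm{u}=\bm{x}$; thus $\bm{A}\bm{x}\oplus\bm{b}\leq\bm{x}$ and $\bm{x}\geq\bm{u}$ is regular. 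For $\subseteq$, take any regular solution $\bm{x}$; from $\bm{A}\bm{x}\leq\bm{x}$ and the monotonicity of left multiplication by $\bm{A}$ I would iterate to obtain $\bm{A}^{k}\bm{x}\leq\bm{x}$ for all $k\geq0$, so $\bm{A}^{\ast}\bm{x}\leq\bm{x}$, whereas $\bm{A}^{\ast}\geq\bm{I}$ forces $\bm{A}^{\ast}\bm{x}\geq\bm{x}$; hence $\bm{A}^{\ast}\bm{x}=\bm{x}$, and taking $\bm{u}=\bm{x}$ (regular, and $\geq\bm{b}$ because $\bm{b}\leq\bm{x}$) exhibits $\bm{x}$ in the required form.

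For statement~2 I would argue by contradiction using the extremal property of the spectral radius. First I would record the equivalence $\mathop\mathrm{Tr}(\bm{A})>\mathbb{1}\iff\lambda>\mathbb{1}$, comparing the defining expressions $\lambda=\bigoplus_{m=1}^{n}\mathop\mathrm{tr}\nolimits^{1/m}(\bm{A}^{m})$ and $\mathop\mathrm{Tr}(\bm{A})=\bigoplus_{m=1}^{n}\mathop\mathrm{tr}(\bm{A}^{m})$ and using that raising to a positive rational power preserves the order relative to $\mathbb{1}$. If a regular solution $\bm{x}$ existed, then $\bm{A}\bm{x}\leq\bm{x}$; left-multiplying by $\bm{x}^{-}$ and using $\bm{x}^{-}\bm{x}=\mathbb{1}$ with monotonicity gives $\bm{x}^{-}\bm{A}\bm{x}\leq\mathbb{1}$. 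But the extremal property $\min\ \bm{x}^{-}\bm{A}\bm{x}=\lambda$ over regular $\bm{x}$ forces $\bm{x}^{-}\bm{A}\bm{x}\geq\lambda>\mathbb{1}$, a contradiction.

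The main obstacle, such as it is, lies in verifying the scalar equivalence $\mathop\mathrm{Tr}(\bm{A})>\mathbb{1}\iff\lambda>\mathbb{1}$, since it hinges on the interplay of idempotent addition, the linear order, and rational powers; everything else reduces to monotonicity of the semiring operations and the already-granted Carr\'e inequality and extremal property. A secondary point is to confirm that regularity is preserved throughout, which is immediate from $\bm{A}^{\ast}\bm{u}\geq\bm{u}$.
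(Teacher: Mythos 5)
Your proof is correct, but it takes a genuinely different route from the paper. The paper handles the reducible case by permuting $\bm{A}$ into the normal form \eqref{E-MNF}, splitting it as $\bm{D}\oplus\bm{T}$, solving the inequality block by block via Lemma~\ref{L-IAxbx} applied to the irreducible diagonal blocks, iterating the resulting implicit equation (using nilpotency of $\bm{D}^{\ast}\bm{T}$), and finally invoking the identity \eqref{E-ABABA} to collapse $(\bm{D}^{\ast}\bm{T})^{\ast}\bm{D}^{\ast}$ into $\bm{A}^{\ast}$; for part~2 it locates a diagonal block with $\mathop\mathrm{Tr}(\bm{A}_{ii})>\mathbb{1}$ and again cites Lemma~\ref{L-IAxbx}. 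You bypass Lemma~\ref{L-IAxbx} and the Proposition entirely: your two-inclusion argument rests only on the Carr\'e inequality (which the paper does state for arbitrary matrices with $\mathop\mathrm{Tr}(\bm{A})\leq\mathbb{1}$, so leaning on it is legitimate within the paper's framework), and your part~2 rests on the extremal property $\min\bm{x}^{-}\bm{A}\bm{x}=\lambda$ together with the scalar equivalence $\mathop\mathrm{Tr}(\bm{A})>\mathbb{1}\iff\lambda>\mathbb{1}$, which you correctly identify as the only point needing care and which does follow from monotonicity of powers in a linearly ordered radicable semifield. Your route is shorter, self-contained, and actually reproves Lemma~\ref{L-IAxbx} as a special case; it also makes visible that the inclusion ``every regular solution has the form $\bm{A}^{\ast}\bm{u}$'' needs no trace condition at all, only the converse does. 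What the paper's route buys in exchange is independence from the unproved-in-the-paper general Carr\'e inequality and from the extremal property (both are cited, not derived), plus the identity \eqref{E-ABABA} and the block-decomposition machinery, which have independent interest. One small presentational point: in your forward inclusion, state explicitly that $\bm{A}\bm{A}^{\ast}=\bm{A}\oplus\cdots\oplus\bm{A}^{n}\leq\bm{A}^{\ast}$ because each summand is bounded by $\bm{A}^{\ast}$ via Carr\'e; as written the chain of equalities could be misread as claiming $\bm{A}\bm{A}^{\ast}=\bm{A}^{\ast}$.
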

\begin{proof}
If the matrix $\bm{A}$ is irreducible, then the statement of the theorem is provided by Lemma~\ref{L-IAxbx}. Suppose that $\bm{A}$ is reducible. Provided that $\bm{A}=\mathbb{0}$, the theorem is trivially true. Consider the case in which $\bm{A}\ne\mathbb{0}$ and assume, without loss of generality, that the matrix $\bm{A}$ has the form of \eqref{E-MNF}.

We define block-diagonal and strictly lower block-triangular matrices
$$
\bm{D}
=
\left(
	\begin{array}{ccc}
		\bm{A}_{11}	&					&	\mathbb{0} \\
								& \ddots	& \\
		\mathbb{0}	&					& \bm{A}_{ss}
	\end{array}
\right),
\qquad
\bm{T}
=
\left(
	\begin{array}{cccc}
		\mathbb{0}	& \ldots	& \ldots	& \mathbb{0} \\
		\bm{A}_{21}	& \ddots	&					& \vdots \\
		\vdots			& \ddots	& \ddots	& \vdots \\
		\bm{A}_{s1}	& \ldots	& \bm{A}_{s,s-1}	& \mathbb{0}
	\end{array}
\right)
$$
to be the corresponding parts in the additive decomposition $\bm{A}=\bm{D}\oplus\bm{T}$.

According to the row partitioning of matrix $\bm{A}$ in form \eqref{E-MNF}, we write both vectors $\bm{x}$ and $\bm{b}$ in their block forms:
$$
\bm{x}
=
\left(
	\begin{array}{c}
		\bm{x}_{1} \\
		\vdots \\
		\bm{x}_{s}
	\end{array}
\right),
\qquad
\bm{b}
=
\left(
	\begin{array}{c}
		\bm{b}_{1} \\
		\vdots \\
		\bm{b}_{s}
	\end{array}
\right),
$$
where $\bm{x}_{i}$ and $\bm{b}_{i}$ are vectors of order $n_{i}$ for each $i=1,\ldots,s$.

Furthermore, we represent \eqref{I-Axbx} as a system of inequalities, one for each row block $i$ in the matrix $\bm{A}$:
$$
\bm{A}_{i1}\bm{x}_{1}\oplus\cdots\oplus\bm{A}_{ii}\bm{x}_{i}\oplus\bm{b}_{i}
\leq
\bm{x}_{i},
\qquad
i=1,\ldots,s.
$$

Suppose that $\mathop\mathrm{Tr}(\bm{A})\leq\mathbb{1}$. In this case, for each $i$, we have
$$
\mathop\mathrm{Tr}(\bm{A}_{ii})
\leq
\mathop\mathrm{Tr}(\bm{A}_{11})\oplus\cdots\oplus\mathop\mathrm{Tr}(\bm{A}_{ss})
=
\mathop\mathrm{Tr}(\bm{A})\leq\mathbb{1}.
$$

Based on Lemma~\ref{L-IAxbx}, the inequality for row block $i$ can be solved with respect to $\bm{x}_{i}$. Assuming that $\bm{v}_{i}$ is a regular vector of order $n_{i}$, the solution is given by
$$
\bm{x}_{i}
=
\bm{A}_{ii}^{\ast}\bm{v}_{i},
\qquad
\bm{v}_{i}
\geq
\bm{A}_{i1}\bm{x}_{1}\oplus\cdots\oplus\bm{A}_{i,i-1}\bm{x}_{i}
\oplus
\bm{b}_{i}.
$$

Using another notation $\bm{u}_{i}$ to represent a regular vector of order $n_{i}$, the last inequality can be rewritten as
$$
\bm{v}_{i}
=
\bm{A}_{i1}\bm{x}_{1}\oplus\cdots\oplus\bm{A}_{i,i-1}\bm{x}_{i}
\oplus
\bm{u}_{i},
\qquad
\bm{u}_{i}
\geq
\bm{b}_{i}.
$$

Returning to the solution for row block $i$, we arrive at the representation
$$
\bm{x}_{i}
=
\bm{A}_{ii}^{\ast}(\bm{A}_{i1}\bm{x}_{1}\oplus\cdots\oplus\bm{A}_{i,i-1}\bm{x}_{i})\oplus\bm{A}_{ii}^{\ast}\bm{u}_{i},
\qquad
\bm{u}_{i}
\geq
\bm{b}_{i}.
$$

After the concatenation of all vectors $\bm{u}_{1},\ldots,\bm{u}_{s}$ into one vector $\bm{u}$ of order $n$, we write all of the solutions in the form of an implicit equation for $\bm{x}$:
$$
\bm{x}
=
\bm{D}^{\ast}\bm{T}\bm{x}\oplus\bm{D}^{\ast}\bm{u},
\qquad
\bm{u}
\geq
\bm{b}.
$$

Considering a strictly block-triangular form of $\bm{T}$ and a conforming block-diagonal form of $\bm{D}^{\ast}$, it is not difficult to verify that $(\bm{D}^{\ast}\bm{T})^{r}=\mathbb{0}$ for all $r\geq s$.

By iterating the implicit equation, we arrive at a regular solution
$$
\bm{x}
=
(\bm{I}\oplus(\bm{D}^{\ast}\bm{T})\oplus\cdots\oplus(\bm{D}^{\ast}\bm{T})^{s-1})\bm{D}^{\ast}\bm{u}
=
(\bm{D}^{\ast}\bm{T})^{\ast}\bm{D}^{\ast}\bm{u},
\qquad
\bm{u}
\geq
\bm{b}.
$$

With the application of identity \eqref{E-ABABA} to $\bm{A}=\bm{D}\oplus\bm{T}$, we place the solution in the desired form $\bm{x}=\bm{A}^{\ast}\bm{u}$ with any regular vector $\bm{u}\geq\bm{b}$.

If $\mathop\mathrm{Tr}(\bm{A})>\mathbb{1}$, then the matrix $\bm{A}$ has at least one row block $i$ with $\mathop\mathrm{Tr}(\bm{A}_{ii})>\mathbb{1}$. In this case, according to Lemma~\ref{L-IAxbx}, no regular solutions $\bm{x}_{i}$ exist for the inequality for this row block, which implies that there are no regular solutions $\bm{x}$ to \eqref{I-Axbx}.
\end{proof}

A graphical illustration of the solutions to the inequality with a common reducible matrix $\bm{A}$ in $\mathbb{R}_{\max,+}^{2}$ is given in Figure~\ref{F-GSAxbx}.
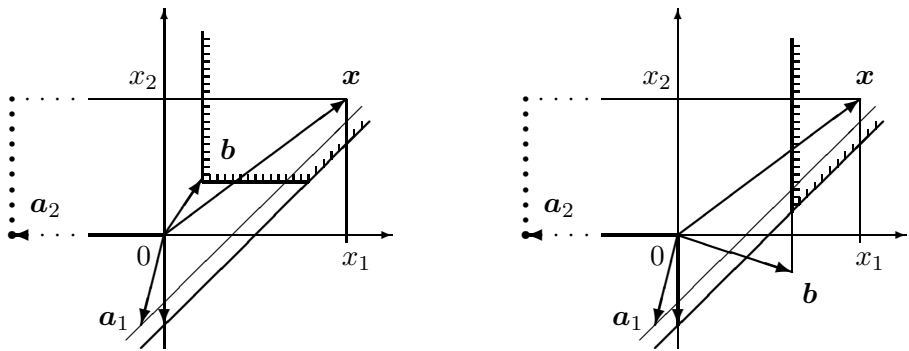
\begin{figure}
\setlength{\unitlength}{1mm}
\begin{center}
\begin{picture}(50,45)

\put(10,15){\vector(1,0){40}}
\put(20,0){\vector(0,1){45}}

\put(0,15){\circle*{1}}
\multiput(0,15)(2,0){5}{\circle*{.5}}

\multiput(0,15)(0,2){10}{\circle*{.75}}

\multiput(0,33)(2,0){5}{\circle*{.5}}

\put(20,15){\thicklines\line(-1,0){10}}

\put(2,15){\thicklines\vector(-1,0){2}}


\put(20,15){\thicklines\vector(-1,-4){3}}

\put(20,15){\thicklines\vector(0,-1){12}}

\put(15,1){\line(1,1){31}}

\put(17,0){\thicklines\line(1,1){30}}
\multiput(40,23)(1,1){7}{\line(0,1){1}}

\put(20,15){\thicklines\vector(2,3){5}}

\put(25,22){\thicklines\line(0,1){20}}
\multiput(25,22)(0,1){20}{\line(1,0){1}}

\put(25,22){\thicklines\line(1,0){14}}
\multiput(25,22)(1,0){15}{\line(0,1){1}}

\put(20,15){\thicklines\vector(4,3){24}}

\put(44,33){\line(0,-1){19}}

\put(44,33){\line(-1,0){34}}

\put(2,18){$\bm{a}_{2}$}

\put(11,3){$\bm{a}_{1}$}

\put(43,35){$\bm{x}$}

\put(27,25){$\bm{b}$}

\put(43,11){$x_{1}$}

\put(15,35){$x_{2}$}

\put(16,11){$0$}

\end{picture}
\hspace{15\unitlength}
\begin{picture}(50,45)

\put(10,15){\vector(1,0){40}}
\put(20,0){\vector(0,1){45}}

\put(0,15){\circle*{1}}
\multiput(0,15)(2,0){5}{\circle*{.5}}

\multiput(0,15)(0,2){10}{\circle*{.75}}

\multiput(0,33)(2,0){5}{\circle*{.5}}

\put(20,15){\thicklines\line(-1,0){10}}

\put(2,15){\thicklines\vector(-1,0){2}}


\put(20,15){\thicklines\vector(-1,-4){3}}

\put(20,15){\thicklines\vector(0,-1){12}}

\put(15,1){\line(1,1){31}}

\put(17,0){\thicklines\line(1,1){30}}
\multiput(36,19)(1,1){11}{\line(0,1){1}}

\put(20,15){\thicklines\vector(3,-1){15}}

\put(35,10){\line(0,1){31}}

\put(35,18){\thicklines\line(0,1){23}}
\multiput(35,19)(0,1){22}{\line(1,0){1}}


\put(20,15){\thicklines\vector(4,3){24}}

\put(44,33){\line(0,-1){19}}

\put(44,33){\line(-1,0){34}}

\put(2,18){$\bm{a}_{2}$}

\put(11,3){$\bm{a}_{1}$}

\put(43,35){$\bm{x}$}

\put(36,6){$\bm{b}$}

\put(43,11){$x_{1}$}

\put(15,35){$x_{2}$}

\put(16,11){$0$}

\end{picture}
\end{center}
\caption{Solution to linear inequalities with a reducible matrix.}
\label{F-GSAxbx}
\end{figure}

\section{A constrained optimisation problem}

We start with a real-world problem drawn from project scheduling and then represent it in terms of the semifield $\mathbb{R}_{\max,+}$. The problem serves as motivation and illustration for the solution of a more common problem that is formulated in the context of a general idempotent semifield, which covers $\mathbb{R}_{\max,+}$ as a specific instance. As the main result, we obtain a solution to the last problem under fairly general assumptions. Particular cases of the problem are considered and illustrated with numerical and graphical examples in the framework of the semifield $\mathbb{R}_{\max,+}$.

\subsection{A project scheduling problem}

We are now concerned with the optimal scheduling of a collection of activities (e.g., jobs and tasks), which presents a rather typical problem in project management (see, e.g., \cite{Institute2010Aguide,Neumann2003Project} for further details and examples).

Consider a project that involves $n$ activities that are conducted under precedence constraints, which are referred to as Start-to-Finish, Start-to-Start, and Early Start constraints. The Start-to-Finish constraints do not allow an activity to be completed until predefined times after the initiation of other activities. The activities are assumed to be completed as early as possible to meet the constraints. The Start-to-Start constraints determine the minimum allowed times between the initiations of activities, whereas the Early Start constraints specify the earliest possible dates for the initiations of activities. 

Each activity in a schedule is characterised by its flow time (also known as turnaround or processing time), which is defined as the time interval between its initiation and its completion. The problem is to design a schedule that minimises the maximum flow time over all activities subject to the above mentioned precedence constraints.

For each activity $i=1,\ldots,n$, we denote its initiation time by $x_{i}$ and its completion time by $y_{i}$. Let $a_{ij}$ be the minimum possible time lag between the initiation of activity $j=1,\ldots,n$ and the completion of activity $i$.

Given $a_{ij}$, the completion time of activity $i$, due to the Start-to-Finish constraints, must satisfy the relations
$$
x_{j}+a_{ij}\leq y_{i},
\quad
j=1,\ldots,n,
$$ 
where at least one inequality holds as an equality. Note that, in the case when $a_{ij}$ is not given for some $j$, it is assumed to be $-\infty$.

The relations can be combined to obtain one obvious equality of the form
$$
y_{i}
=
\max(x_{1}+a_{i1},\ldots,x_{n}+a_{in}).
$$

By replacing the ordinary operations with those in $\mathbb{R}_{\max,+}$, we obtain
$$
y_{i}
=
a_{i1}x_{1}\oplus\cdots\oplus a_{in}x_{n},
\quad
i=1,\ldots,n.
$$

Using a matrix $\bm{A}=(a_{ij})$ and vectors $\bm{x}=(x_{i})$ and $\bm{y}=(y_{i})$, we represent the scalar equalities in a vector form
$$
\bm{y}
=
\bm{A}\bm{x}.
$$

Furthermore, we formulate a problem for the minimisation of the maximum flow time of the activities. In ordinary notation, the objective function in the problem is given by
$$
\max(y_{1}-x_{1},\ldots,y_{n}-x_{n}).
$$

Since, in terms of $\mathbb{R}_{\max,+}$, the function is written as
$$
x_{1}^{-1}y_{1}\oplus\cdots\oplus x_{n}^{-1}y_{n}
=
\bm{x}^{-}\bm{y}
=
\bm{x}^{-}\bm{A}\bm{x},
$$
we obtain a problem that requires us to find all regular $\bm{x}$ such that
\begin{equation*}
\begin{aligned}
&
\text{minimise}
&&
\bm{x}^{-}\bm{A}\bm{x}.
\end{aligned}
\end{equation*}

We now incorporate the Start-to-Start and Early Start constraints in the problem formulation. For each activity $i=1,\ldots,n$, we use $g_{i}$ to denote the earliest possible initiation time and $c_{ij}$ to denote the minimum possible time lag between the initiation of activity $j=1,\ldots,n$ and the initiation of activity $i$. Given $c_{ij}$ and $g_{i}$, the initiation time $x_{i}$ for activity $i$ is subject to the relations
$$
\max(x_{1}+c_{i1},\ldots,x_{n}+c_{in})
\leq
x_{i},
\qquad
g_{i}
\leq
x_{i}.
$$

The representation in terms of $\mathbb{R}_{\max,+}$ results in scalar inequalities:
$$
c_{i1}x_{1}\oplus\cdots\oplus c_{in}x_{n}
\leq
x_{i},
\qquad
g_{i}
\leq
x_{i}.
$$

Using the matrix-vector notation $\bm{C}=(c_{ij})$ and $\bm{g}=(g_{i})$, we obtain
$$
\bm{C}\bm{x}
\leq
\bm{x},
\qquad
\bm{g}
\leq
\bm{x}.
$$

The optimal scheduling problem under consideration takes the form
\begin{equation*}
\begin{aligned}
&
\text{minimise}
&&
\bm{x}^{-}\bm{A}\bm{x},
\\
&
\text{subject to}
&&
\bm{C}\bm{x}
\leq
\bm{x},
\quad
\bm{g}
\leq
\bm{x}.
\end{aligned}
\end{equation*}

In conclusion, we note that the solution to the problem (if any exists) is not unique. Specifically, if there is a solution $\bm{x}$, then $\alpha\bm{x}$, where $\alpha>\mathbb{1}=0$, is also a solution. In the context of project scheduling, however, a unique solution naturally appears when the minimal solution is of interest. 

\subsection{A general representation}

Let $\mathbb{X}$ be a linearly ordered radicable idempotent semifield. Given two matrices $\bm{A},\bm{C}\in\mathbb{X}^{n\times n}$ and a vector $\bm{g}\in\mathbb{X}^{n}$, the problem is to find all regular solutions $\bm{x}\in\mathbb{X}_{+}^{n}$ that
\begin{equation}
\begin{aligned}
&
\text{minimise}
&&
\bm{x}^{-}\bm{A}\bm{x},
\\
&
\text{subject to}
&&
\bm{C}\bm{x}
\leq
\bm{x},
\quad
\bm{g}
\leq
\bm{x}.
\end{aligned}
\label{P-xAxCxxxg}
\end{equation}

Since, for all vectors $\bm{x}$, the inequalities $\bm{C}\bm{x}\leq\bm{x}$ and $\bm{x}\geq\bm{g}$ can be readily combined into one equivalent inequality $\bm{C}\bm{x}\oplus\bm{g}\leq\bm{x}$, the problem has an alternative representation in the form
\begin{equation}
\begin{aligned}
&
\text{minimise}
&&
\bm{x}^{-}\bm{A}\bm{x},
\\
&
\text{subject to}
&&
\bm{C}\bm{x}\oplus\bm{g}
\leq
\bm{x}.
\end{aligned}
\label{P-xAxCxxxg1}
\end{equation}

Consider the set of regular vectors that are determined by the inequality constraints in the problem. An appropriate necessary and sufficient condition for the set to be nonempty follows from Theorem~\ref{T-Axbx} and requires that $\mathop\mathrm{Tr}(\bm{C})\leq\mathbb{1}$.

\subsection{The main result}

A complete solution to the optimisation problem under study is now given by an expression to represent a general vector that describes all regular vectors in the solution set, and to be called below the general regular solution. 

\begin{theorem}\label{T-xAxCxxxg}
Let $\bm{x}$ be the general regular solution to problem \eqref{P-xAxCxxxg}, where $\bm{A}$ is a matrix with spectral radius $\lambda>\mathbb{0}$ and $\bm{C}$ is a matrix with $\mathop\mathrm{Tr}(\bm{C})\leq\mathbb{1}$, and let
\begin{equation}
\theta
=
\lambda
+
\bigoplus_{k=1}^{n-1}\mathop{\bigoplus\hspace{1.1em}}_{1\leq i_{1}+\cdots+i_{k}\leq n-k}\mathop\mathrm{tr}\nolimits^{1/k}(\bm{A}\bm{C}^{i_{1}}\cdots\bm{A}\bm{C}^{i_{k}}).
\label{E-theta}
\end{equation}

Then, the minimum in \eqref{P-xAxCxxxg} is equal to $\theta$ and
\begin{equation}
\bm{x}
=
(\theta^{-1}\bm{A}\oplus\bm{C})^{\ast}\bm{u},
\qquad
\bm{u}
\geq
\bm{g}.
\label{E-xSu}
\end{equation}
\end{theorem}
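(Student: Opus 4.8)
The plan is to follow the parametrisation technique announced in the introduction: introduce a scalar parameter $\mu$ that bounds the objective from above, absorb it into the constraints, and reduce the whole problem to a single linear inequality already solved in Theorem~\ref{T-Axbx}. The starting observation is that, for a regular vector $\bm{x}$, the scalar inequality $\bm{x}^{-}\bm{A}\bm{x}\leq\mu$ is equivalent to the vector inequality $\bm{A}\bm{x}\leq\mu\bm{x}$, hence to $\mu^{-1}\bm{A}\bm{x}\leq\bm{x}$ (invertibility of $\mu$ is harmless, since the extremal property of the spectral radius forces any relevant value to satisfy $\mu\geq\lambda>\mathbb{0}$). Combining this with the feasibility constraint $\bm{C}\bm{x}\oplus\bm{g}\leq\bm{x}$ from representation \eqref{P-xAxCxxxg1} collapses everything into the single inequality $(\mu^{-1}\bm{A}\oplus\bm{C})\bm{x}\oplus\bm{g}\leq\bm{x}$, which is precisely \eqref{I-Axbx} with matrix $\mu^{-1}\bm{A}\oplus\bm{C}$ and right-hand side $\bm{g}$.

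Next I would invoke Theorem~\ref{T-Axbx} to read off exactly when this parametrised inequality admits a regular solution: the condition is $\mathop\mathrm{Tr}(\mu^{-1}\bm{A}\oplus\bm{C})\leq\mathbb{1}$, and in that case the complete regular solution is $\bm{x}=(\mu^{-1}\bm{A}\oplus\bm{C})^{\ast}\bm{u}$ with $\bm{u}\geq\bm{g}$. The problem thus reduces to finding the least admissible $\mu$. Because any feasible $\bm{x}$ with objective value $\mu_{0}=\bm{x}^{-}\bm{A}\bm{x}$ solves the inequality at $\mu=\mu_{0}$, while conversely every solution of the inequality is feasible with objective at most $\mu$, the minimum of \eqref{P-xAxCxxxg} coincides with the smallest $\mu$ satisfying $\mathop\mathrm{Tr}(\mu^{-1}\bm{A}\oplus\bm{C})\leq\mathbb{1}$.

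The computational core is to evaluate this threshold explicitly, and this is the step I expect to be the main obstacle. I would expand $\mathop\mathrm{Tr}(\mu^{-1}\bm{A}\oplus\bm{C})=\bigoplus_{m=1}^{n}\mathop\mathrm{tr}(\mu^{-1}\bm{A}\oplus\bm{C})^{m}$ via the binomial trace identity \eqref{E-trABm}, pulling the scalar out of each product so that a term with $k$ factors of $\bm{A}$ contributes $\mu^{-k}\mathop\mathrm{tr}(\bm{A}\bm{C}^{i_{1}}\cdots\bm{A}\bm{C}^{i_{k}})$. Regrouping the double sum over $m$ and $k$ by the number $k$ of $\bm{A}$-factors, the pure-$\bm{C}$ terms assemble into $\mathop\mathrm{Tr}(\bm{C})\leq\mathbb{1}$ and drop out of the binding condition, leaving $\bigoplus_{k=1}^{n}\mu^{-k}S_{k}$ with $S_{k}=\bigoplus_{0\leq i_{1}+\cdots+i_{k}\leq n-k}\mathop\mathrm{tr}(\bm{A}\bm{C}^{i_{1}}\cdots\bm{A}\bm{C}^{i_{k}})$. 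The condition then splits into the scalar inequalities $\mu^{-k}S_{k}\leq\mathbb{1}$; using radicability and the monotonicity of the power map (so that $k$-th roots commute with $\oplus$), each reads $\mu\geq S_{k}^{1/k}$, whence the least admissible value is $\bigoplus_{k=1}^{n}S_{k}^{1/k}$. Finally I would identify this quantity with $\theta$ of \eqref{E-theta}: the all-zero-exponent terms reproduce $\bigoplus_{k=1}^{n}\mathop\mathrm{tr}^{1/k}(\bm{A}^{k})=\lambda$, while the terms with $i_{1}+\cdots+i_{k}\geq1$ reproduce the mixed sum, which is restricted to $k\leq n-1$ precisely because $i_{1}+\cdots+i_{k}\geq1$ is incompatible with $i_{1}+\cdots+i_{k}\leq n-k$ when $k=n$.

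With $\mu=\theta$ regular and strictly above $\mathbb{0}$ (guaranteed by $\lambda>\mathbb{0}$), Theorem~\ref{T-Axbx} delivers the complete solution set $\bm{x}=(\theta^{-1}\bm{A}\oplus\bm{C})^{\ast}\bm{u}$, $\bm{u}\geq\bm{g}$, and the bounding argument above certifies that every such $\bm{x}$ attains the optimal value $\theta$ while no feasible vector does better, yielding both claims.
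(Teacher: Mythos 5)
Your proposal is correct and follows essentially the same route as the paper's own proof: reduce the problem to the parametrised inequality $(\mu^{-1}\bm{A}\oplus\bm{C})\bm{x}\oplus\bm{g}\leq\bm{x}$, apply Theorem~\ref{T-Axbx} to get the existence condition $\mathop\mathrm{Tr}(\mu^{-1}\bm{A}\oplus\bm{C})\leq\mathbb{1}$, expand it with the binomial trace identity \eqref{E-trABm}, and solve for the least admissible parameter, which is $\theta$. The only difference is presentational (you minimise over an explicit parameter $\mu$ rather than positing $\theta$ as the minimum at the outset), and all the key steps, including the equivalence $\bm{x}^{-}\bm{A}\bm{x}\leq\mu\Leftrightarrow\mu^{-1}\bm{A}\bm{x}\leq\bm{x}$ via $\bm{x}\bm{x}^{-}\geq\bm{I}$ and $\bm{x}^{-}\bm{x}=\mathbb{1}$, match the paper.
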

\begin{proof}
We start with the problem in the form of \eqref{P-xAxCxxxg1} and then follow the approach suggested in \cite{Krivulin2012Atropical}. We introduce an additional variable and reduce the problem to an inequality. Furthermore, existence conditions for solutions to the inequality are used to determine the variable, whereas the solution to the inequality is considered as the solution to the original problem. 

Suppose that $\theta$ is the minimum of the objective function in \eqref{P-xAxCxxxg1}. Note that $\theta\geq\lambda>\mathbb{0}$. All regular vectors $\bm{x}$ that yield the minimum are therefore given by the system
$$
\bm{x}^{-}\bm{A}\bm{x}
=
\theta,
\qquad
\bm{C}\bm{x}\oplus\bm{g}
\leq
\bm{x}.
$$

Consider the first equality $\bm{x}^{-}\bm{A}\bm{x}=\theta$. Since, for all $\bm{x}$, we actually have $\bm{x}^{-}\bm{A}\bm{x}\geq\theta$, the equality can be further replaced with the inequality $\bm{x}^{-}\bm{A}\bm{x}\leq\theta$.

We then take the inequality $\bm{x}^{-}\bm{A}\bm{x}\leq\theta$ and multiply it by $\theta^{-1}\bm{x}$ from the left. Since $\bm{x}\bm{x}^{-}\geq\bm{I}$ for all regular $\bm{x}$, we have $\theta^{-1}\bm{A}\bm{x}\leq\theta^{-1}\bm{x}\bm{x}^{-}\bm{A}\bm{x}\leq\bm{x}$ and thus obtain a new inequality $\theta^{-1}\bm{A}\bm{x}\leq\bm{x}$.

Considering that the left multiplication of the last inequality by $\theta\bm{x}^{-}$ and an obvious equality $\bm{x}^{-}\bm{x}=\mathbb{1}$ give the former one, both inequalities are equivalent.

The solution set to the problem is now defined by the system of inequalities
$$
\theta^{-1}\bm{A}\bm{x}
\leq
\bm{x},
\qquad
\bm{C}\bm{x}\oplus\bm{g}
\leq
\bm{x}.
$$

It is easy to verify that the system is equivalent to one inequality:
\begin{equation}
(\theta^{-1}\bm{A}\oplus\bm{C})\bm{x}\oplus\bm{g}
\leq
\bm{x}.
\label{I-thetaACxgx}
\end{equation}

It follows from Theorem~\ref{T-Axbx} that a necessary and sufficient condition for regular solutions to the inequality to exist is given by
\begin{equation}
\mathop\mathrm{Tr}(\theta^{-1}\bm{A}\oplus\bm{C})
\leq
\mathbb{1}.
\label{I-Trtheta1AC1}
\end{equation}

To further examine the existence condition, we rewrite the left side of the inequality and then apply the binomial identity \eqref{E-trABm} to obtain
\begin{multline*}
\mathop\mathrm{Tr}(\theta^{-1}\bm{A}\oplus\bm{C})
=
\bigoplus_{m=1}^{n}\mathop\mathrm{tr}(\theta^{-1}\bm{A}\oplus\bm{C})^{m}
\\
=
\bigoplus_{m=1}^{n}\bigoplus_{k=1}^{m}\mathop{\bigoplus\hspace{2.3em}}_{i_{1}+\cdots+i_{k}=m-k}\mathop\mathrm{tr}(\theta^{-1}\bm{A}\bm{C}^{i_{1}}\cdots\theta^{-1}\bm{A}\bm{C}^{i_{k}})
\oplus
\bigoplus_{m=1}^{n}\mathop\mathrm{tr}(\bm{C}^{m})
\\
=
\bigoplus_{m=1}^{n}\bigoplus_{k=1}^{m}\mathop{\bigoplus\hspace{2.3em}}_{i_{1}+\cdots+i_{k}=m-k}\theta^{-k}\mathop\mathrm{tr}(\bm{A}\bm{C}^{i_{1}}\cdots\bm{A}\bm{C}^{i_{k}})
\oplus
\mathop\mathrm{Tr}(\bm{C}).
\end{multline*}

Furthermore, the rearrangement of the terms in the sum gives the following expression:
$$
\mathop\mathrm{Tr}(\theta^{-1}\bm{A}\oplus\bm{C})
=
\bigoplus_{k=1}^{n}\mathop{\bigoplus\hspace{1.2em}}_{0\leq i_{1}+\cdots+i_{k}\leq n-k}\theta^{-k}\mathop\mathrm{tr}(\bm{A}\bm{C}^{i_{1}}\cdots\bm{A}\bm{C}^{i_{k}})
\oplus
\mathop\mathrm{Tr}(\bm{C}).
$$

Considering that $\mathop\mathrm{Tr}(\bm{C})\leq\mathbb{1}$ based on the conditions of the theorem, the existence condition \eqref{I-Trtheta1AC1} reduces to the inequalities
$$
\mathop{\bigoplus\hspace{1.2em}}_{0\leq i_{1}+\cdots+i_{k}\leq n-k}\theta^{-k}\mathop\mathrm{tr}(\bm{A}\bm{C}^{i_{1}}\cdots\bm{A}\bm{C}^{i_{k}})
\leq
\mathbb{1},
\qquad
k=1,\ldots,n.
$$

By solving the inequalities with respect to $\theta$, we obtain the inequality
\begin{multline*}
\theta
\geq
\bigoplus_{k=1}^{n}\mathop{\bigoplus\hspace{1.2em}}_{0\leq i_{1}+\cdots+i_{k}\leq n-k}\mathop\mathrm{tr}\nolimits^{1/k}(\bm{A}\bm{C}^{i_{1}}\cdots\bm{A}\bm{C}^{i_{k}})
\\
=
\lambda
\oplus
\bigoplus_{k=1}^{n-1}\mathop{\bigoplus\hspace{1.2em}}_{1\leq i_{1}+\cdots+i_{k}\leq n-k}\mathop\mathrm{tr}\nolimits^{1/k}(\bm{A}\bm{C}^{i_{1}}\cdots\bm{A}\bm{C}^{i_{k}}).
\end{multline*}

For $\theta$ to be the minimum in the problem, the last inequality must be satisfied as an equality, which gives \eqref{E-theta}.

Finally, the application of Theorem~\ref{T-Axbx} to inequality \eqref{I-thetaACxgx} leads to the solution in the form of \eqref{E-xSu}.
\end{proof}

Note that the minimum solution to the problem is given by $\bm{x}_{0}=(\theta^{-1}\bm{A}\oplus\bm{C})^{\ast}\bm{g}$.

\subsection{Particular cases}

First, assume that $\bm{C}=\mathbb{0}$ and consider the following problem:
\begin{equation}
\begin{aligned}
&
\text{minimise}
&&
\bm{x}^{-}\bm{A}\bm{x},
\\
&
\text{subject to}
&&
\bm{x}
\geq
\bm{g}.
\end{aligned}
\label{P-xAxxg}
\end{equation}

The next solution to the problem is a direct consequence of Theorem~\ref{T-xAxCxxxg}.

\begin{corollary}\label{C-xAxxg}
For any matrix $\bm{A}$ with spectral radius $\lambda>\mathbb{0}$, the minimum in problem \eqref{P-xAxxg} is $\lambda$ and the general regular solution to the problem is given by
$$
\bm{x}
=
(\lambda^{-1}\bm{A})^{\ast}\bm{u},
\qquad
\bm{u}
\geq
\bm{g}.
$$
\end{corollary}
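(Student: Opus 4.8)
The plan is to obtain the corollary purely as a specialisation of Theorem~\ref{T-xAxCxxxg} to the case $\bm{C}=\mathbb{0}$, so that no new machinery is required. First I would observe that problem~\eqref{P-xAxxg} is precisely problem~\eqref{P-xAxCxxxg} with $\bm{C}=\mathbb{0}$: the constraint $\bm{C}\bm{x}\leq\bm{x}$ then degenerates into $\mathbb{0}\leq\bm{x}$, which holds for every vector, so only the lower bound $\bm{x}\geq\bm{g}$ survives. To apply the theorem I then verify its hypotheses. The assumption $\lambda>\mathbb{0}$ on the spectral radius of $\bm{A}$ is granted directly, and the condition $\mathop\mathrm{Tr}(\bm{C})\leq\mathbb{1}$ is immediate, since $\mathop\mathrm{Tr}(\mathbb{0})=\mathbb{0}\leq\mathbb{1}$.

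The only genuine computation is the evaluation of the scalar $\theta$ defined by~\eqref{E-theta} when $\bm{C}=\mathbb{0}$. Here I would exploit the range of the inner double sum, which is restricted to index tuples satisfying $1\leq i_{1}+\cdots+i_{k}$. This forces at least one exponent $i_{j}\geq1$, whence the corresponding factor equals $\bm{C}^{i_{j}}=\mathbb{0}$ and the whole product $\bm{A}\bm{C}^{i_{1}}\cdots\bm{A}\bm{C}^{i_{k}}$ collapses to the zero matrix, whose trace is $\mathbb{0}$. Consequently every summand of the double sum equals $\mathbb{0}$, and~\eqref{E-theta} reduces to $\theta=\lambda$. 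This vanishing of all higher-order trace terms is the one step deserving attention, although it becomes routine once the lower limit on the summation index is read off correctly.

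Finally, I would substitute $\bm{C}=\mathbb{0}$ and $\theta=\lambda$ into the solution formula~\eqref{E-xSu}. Since $\lambda^{-1}\bm{A}\oplus\mathbb{0}=\lambda^{-1}\bm{A}$, the general regular solution becomes $\bm{x}=(\lambda^{-1}\bm{A})^{\ast}\bm{u}$ for all regular $\bm{u}\geq\bm{g}$, which is exactly the claimed expression, while the attained minimum is $\theta=\lambda$. I expect no real obstacle: the whole argument is a direct substitution, and the only point to check with care is that the lower limit $1\leq i_{1}+\cdots+i_{k}$ guarantees a positive exponent in every product of the double sum, so that the sum vanishes and $\theta$ collapses to the isolated term $\lambda$.
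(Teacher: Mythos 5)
Your proposal is correct and follows exactly the route the paper takes: Corollary~\ref{C-xAxxg} is stated there as a direct consequence of Theorem~\ref{T-xAxCxxxg} with $\bm{C}=\mathbb{0}$, and you correctly supply the one nontrivial detail, namely that the restriction $1\leq i_{1}+\cdots+i_{k}$ in \eqref{E-theta} forces a factor $\bm{C}^{i_{j}}=\mathbb{0}$ in every product, so $\theta$ collapses to $\lambda$ and \eqref{E-xSu} becomes $\bm{x}=(\lambda^{-1}\bm{A})^{\ast}\bm{u}$ with $\bm{u}\geq\bm{g}$.
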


Now consider the case in which the problem has no lower bound constraints and is thus given by 
\begin{equation}
\begin{aligned}
&
\text{minimise}
&&
\bm{x}^{-}\bm{A}\bm{x},
\\
&
\text{subject to}
&&
\bm{C}\bm{x}
\leq
\bm{x}.
\end{aligned}
\label{P-xAxCxx}
\end{equation}

By setting $\bm{g}=\mathbb{0}$ in Theorem~\ref{T-xAxCxxxg}, we immediately arrive at the following result.
\begin{corollary}\label{C-xAxCxx}
For any matrix $\bm{A}$ with spectral radius $\lambda>\mathbb{0}$ and matrix $\bm{C}$ with $\mathop\mathrm{Tr}(\bm{C})\leq\mathbb{1}$, the minimum in problem \eqref{P-xAxCxx} is defined by \eqref{E-theta} and the general regular solution to the problem is given by
$$
\bm{x}
=
(\theta^{-1}\bm{A}\oplus\bm{C})^{\ast}\bm{u},
\qquad
\bm{u}
>
\mathbb{0}.
$$
\end{corollary}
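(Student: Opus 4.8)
The plan is to obtain Corollary~\ref{C-xAxCxx} as the special case $\bm{g}=\mathbb{0}$ of Theorem~\ref{T-xAxCxxxg}, so the argument is essentially a matter of checking that every ingredient of the theorem degenerates correctly. First I would note that setting $\bm{g}=\mathbb{0}$ turns the lower-bound constraint $\bm{g}\leq\bm{x}$ into $\mathbb{0}\leq\bm{x}$, which is satisfied by every vector because $\mathbb{0}$ is the least element of $\mathbb{X}^{n}$; hence the feasible set of~\eqref{P-xAxCxxxg} collapses to that of~\eqref{P-xAxCxx}, and the two minimisation problems coincide.

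Second, I would observe that the scalar $\theta$ defined in~\eqref{E-theta} is built solely from $\lambda$, the traces $\mathop\mathrm{tr}(\bm{A}\bm{C}^{i_{1}}\cdots\bm{A}\bm{C}^{i_{k}})$, and the order $n$, none of which depend on $\bm{g}$. The standing hypotheses $\lambda>\mathbb{0}$ and $\mathop\mathrm{Tr}(\bm{C})\leq\mathbb{1}$ are exactly those required by Theorem~\ref{T-xAxCxxxg}, so that theorem applies and yields the same minimum value $\theta$, which proves the first assertion of the corollary.

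Third, for the solution set the theorem gives $\bm{x}=(\theta^{-1}\bm{A}\oplus\bm{C})^{\ast}\bm{u}$ with any regular $\bm{u}\geq\bm{g}$. With $\bm{g}=\mathbb{0}$ the inequality $\bm{u}\geq\mathbb{0}$ imposes no genuine lower bound, so the only surviving restriction is the regularity of $\bm{u}$, which I would record as $\bm{u}>\mathbb{0}$. The single point that deserves explicit justification---and the closest thing to an obstacle here---is that this regularity cannot simply be dropped: since $(\theta^{-1}\bm{A}\oplus\bm{C})^{\ast}\geq\bm{I}$ by the definition of the star operator, we have $\bm{x}\geq\bm{u}$ componentwise, so a regular $\bm{u}$ always produces a regular $\bm{x}$, whereas an irregular $\bm{u}$ is incompatible with describing the set of \emph{all} regular solutions. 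Combining the three steps establishes both the minimum and the general regular solution stated in the corollary.
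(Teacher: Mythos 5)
Your proposal is correct and matches the paper's own derivation, which obtains the corollary simply by setting $\bm{g}=\mathbb{0}$ in Theorem~\ref{T-xAxCxxxg}; your additional remarks on why $\theta$ is unaffected and why the condition $\bm{u}\geq\mathbb{0}$ reduces to regularity $\bm{u}>\mathbb{0}$ are sound elaborations of the same argument.
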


It is easy to see that, when at least one of the matrices $\bm{A}$ and $\bm{C}$ is irreducible, the last result coincides with that in \cite{Krivulin2012Atropical}.

Finally, for a problem without any linear constraint in the form
\begin{equation}
\begin{aligned}
&
\text{minimise}
&&
\bm{x}^{-}\bm{A}\bm{x},
\end{aligned}
\label{P-xAx}
\end{equation}
the solution offered by the theorem is reduced as follows.
\begin{corollary}\label{C-xAx}
For any matrix $\bm{A}$ with spectral radius $\lambda>\mathbb{0}$, the minimum in problem \eqref{P-xAx} is $\lambda$ and the general regular solution to the problem is given by
$$
\bm{x}
=
(\lambda^{-1}\bm{A})^{\ast}\bm{u},
\qquad
\bm{u}
>
\mathbb{0}.
$$
\end{corollary}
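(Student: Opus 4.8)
The plan is to obtain this statement as the common specialisation of Theorem~\ref{T-xAxCxxxg} with both $\bm{C}=\mathbb{0}$ and $\bm{g}=\mathbb{0}$, so that problem~\eqref{P-xAx} becomes the instance of~\eqref{P-xAxCxxxg} in which all constraints disappear. First I would check that the hypotheses of the theorem are met: taking $\bm{C}=\mathbb{0}$ gives $\mathop\mathrm{Tr}(\bm{C})=\mathbb{0}\leq\mathbb{1}$ automatically, so the only surviving requirement is $\lambda>\mathbb{0}$, which is assumed. Thus the theorem applies, and the minimum equals the value $\theta$ from~\eqref{E-theta}.

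Next I would evaluate $\theta$ under $\bm{C}=\mathbb{0}$. In the double sum of~\eqref{E-theta} every index range satisfies $i_{1}+\cdots+i_{k}\geq 1$, so at least one exponent $i_{j}\geq 1$; since $\bm{C}=\mathbb{0}$ forces $\bm{C}^{i_{j}}=\mathbb{0}$, each product $\bm{A}\bm{C}^{i_{1}}\cdots\bm{A}\bm{C}^{i_{k}}$ vanishes and contributes $\mathbb{0}$. The whole double sum therefore collapses to the neutral element, leaving $\theta=\lambda\oplus\mathbb{0}=\lambda$. Consequently $\theta^{-1}\bm{A}\oplus\bm{C}=\lambda^{-1}\bm{A}$, and the solution~\eqref{E-xSu} specialises to $\bm{x}=(\lambda^{-1}\bm{A})^{\ast}\bm{u}$. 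It then remains to handle the range of $\bm{u}$: setting $\bm{g}=\mathbb{0}$ makes the condition $\bm{u}\geq\bm{g}$ hold for every vector, since $\mathbb{0}$ is the least element, and the only restriction left is the standing requirement that $\bm{u}$ be regular, which is exactly $\bm{u}>\mathbb{0}$. This yields the claimed description of the general regular solution.

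I do not expect a genuine obstacle, as the statement is a specialisation rather than a fresh argument; the one point demanding care is precisely the collapse of the trace sum in~\eqref{E-theta} to $\theta=\lambda$. If one preferred a self-contained derivation, the same result would follow from the extremal property $\min\bm{x}^{-}\bm{A}\bm{x}=\lambda$ recorded in the preliminaries together with Theorem~\ref{T-Axbx}: the equality $\bm{x}^{-}\bm{A}\bm{x}=\lambda$ is equivalent to $\bm{x}^{-}\bm{A}\bm{x}\leq\lambda$, which after left multiplication by $\lambda^{-1}\bm{x}$ (using $\bm{x}\bm{x}^{-}\geq\bm{I}$, a reversible step via $\bm{x}^{-}\bm{x}=\mathbb{1}$) becomes $\lambda^{-1}\bm{A}\bm{x}\leq\bm{x}$. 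Since $\lambda^{-1}\bm{A}$ has spectral radius $\mathbb{1}$ and hence $\mathop\mathrm{Tr}(\lambda^{-1}\bm{A})\leq\mathbb{1}$, Theorem~\ref{T-Axbx} with $\bm{b}=\mathbb{0}$ gives precisely $\bm{x}=(\lambda^{-1}\bm{A})^{\ast}\bm{u}$ for all regular $\bm{u}$, i.e.\ $\bm{u}>\mathbb{0}$.
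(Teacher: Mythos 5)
Your proposal is correct and follows exactly the paper's route: the paper obtains Corollary~\ref{C-xAx} by specialising Theorem~\ref{T-xAxCxxxg} with $\bm{C}=\mathbb{0}$ and $\bm{g}=\mathbb{0}$, which is precisely what you do. Your explicit verification that the double sum in \eqref{E-theta} vanishes (so that $\theta=\lambda$) is a detail the paper leaves implicit, and your alternative self-contained argument via the extremal property and Theorem~\ref{T-Axbx} is sound but unnecessary here.
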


Note that this result is quite consistent with that in \cite{Krivulin2012Acomplete}.

\subsection{Illustrative examples}

To illustrate the results obtained, we provide numerical and graphical examples in $\mathbb{R}_{\max,+}^{2}$ for problem \eqref{P-xAxCxxxg} with both irreducible and reducible matrices.

\subsubsection{Irreducible matrices}

We start with a problem with irreducible matrices $\bm{A}$ and $\bm{C}$, which are defined, in addition to vector $\bm{g}$, as follows:
$$
\bm{A}
=
\left(
\begin{array}{rr}
0 & -2
\\
-7 & -3
\end{array}
\right),
\qquad
\bm{C}
=
\left(
\begin{array}{rr}
0 & -10
\\
4 & -3
\end{array}
\right),
\qquad
\bm{g}
=
\left(
\begin{array}{r}
-9
\\
6
\end{array}
\right).
$$

First, consider problem~\eqref{P-xAx}, which has no constraints. Taking into account the matrices $\bm{A}$ and
$$
\bm{A}^{2}
=
\left(
\begin{array}{rr}
0 & -2
\\
-7 & -3
\end{array}
\right)
\left(
\begin{array}{rr}
0 & -2
\\
-7 & -3
\end{array}
\right)
=
\left(
\begin{array}{rr}
0 & -2
\\
-7 & -6
\end{array}
\right),
$$
we find the spectral radius $\lambda=\mathop\mathrm{tr}\bm{A}\oplus\mathop\mathrm{tr}\nolimits^{1/2}(\bm{A}^{2})=0=\mathbb{1}$. Then, we get
$$
(\lambda^{-1}\bm{A})^{\ast}
=
\bm{A}^{\ast}
=
\bm{I}\oplus\bm{A}
=
\left(
\begin{array}{rr}
0 & -2
\\
-7 & 0
\end{array}
\right).
$$ 

The solution to the unconstrained problem takes the form
$$
\bm{x}
=
\left(
\begin{array}{rr}
0 & -2
\\
-7 & 0
\end{array}
\right)
\bm{u},
\qquad
\bm{u}\in\mathbb{R}^{2},
$$
which is graphically represented as in Figure~\ref{F-IUPCP} (left).

Furthermore, we obtain the solution to the inequality constraints in the problem. To apply Theorem~\ref{T-Axbx}, we take the matrices $\bm{C}$ and
$$
\bm{C}^{2}
=
\left(
\begin{array}{rr}
0 & -10
\\
4 & -3
\end{array}
\right)
\left(
\begin{array}{rr}
0 & -10
\\
4 & -3
\end{array}
\right)
=
\left(
\begin{array}{rr}
0 & -10
\\
4 & -6
\end{array}
\right).
$$

We then verify that $\mathop\mathrm{Tr}(\bm{C})=\mathop\mathrm{tr}\bm{C}\oplus\mathop\mathrm{tr}\bm{C}^{2}=0=\mathbb{1}$. After calculating
$$
\bm{C}^{\ast}
=
\bm{I}
\oplus
\bm{C}
=
\left(
\begin{array}{rr}
0 & -10
\\
4 & 0
\end{array}
\right)
$$
we arrive at the solution (see Figure~\ref{F-IUPCP}, right)
$$
\bm{x}
=
\left(
\begin{array}{rr}
0 & -10
\\
4 & 0
\end{array}
\right)
\bm{u},
\qquad
\bm{u}
\geq
\left(
\begin{array}{r}
-9
\\
6
\end{array}
\right).
$$

We now use Theorem~\ref{T-xAxCxxxg} to solve the constrained problem. We calculate
$$
\bm{A}\bm{C}
=
\left(
\begin{array}{rr}
0 & -2
\\
-7 & -3
\end{array}
\right)
\left(
\begin{array}{rr}
0 & -10
\\
4 & -3
\end{array}
\right)
=
\left(
\begin{array}{rr}
2 & -5
\\
1 & -6
\end{array}
\right),
$$
and then get $\theta=\lambda\oplus\mathop\mathrm{tr}(\bm{A}\bm{C})=2$. Furthermore, we have
$$
\theta^{-1}\bm{A}\oplus\bm{C}
=
\left(
\begin{array}{rr}
0 & -4
\\
4 & -3
\end{array}
\right),
\qquad
(\theta^{-1}\bm{A}\oplus\bm{C})^{\ast}
=
\bm{I}\oplus\theta^{-1}\bm{A}\oplus\bm{C}
=
\left(
\begin{array}{rr}
0 & -4
\\
4 & 0
\end{array}
\right).
$$

All solutions to the problem take the form
$$
\bm{x}
=
\left(
\begin{array}{rr}
0 & -4
\\
4 & 0
\end{array}
\right)\bm{u},
\qquad
\bm{u}
\geq
\left(
\begin{array}{r}
-9
\\
6
\end{array}
\right).
$$

To simplify the solution, note that the columns in matrix $(\theta^{-1}\bm{A}\oplus\bm{C})^{\ast}$ are collinear. Therefore, we can write
$$
\bm{x}
=
(\theta^{-1}\bm{A}\oplus\bm{C})^{\ast}\bm{u}
=
\left(
\begin{array}{rr}
0 & -4
\\
4 & 0
\end{array}
\right)
\bm{u}
=
\left(
\begin{array}{r}
0
\\
4
\end{array}
\right)
\left(
\begin{array}{rr}
0 & -4
\end{array}
\right)
\bm{u}.
$$

Using a new scalar variable
$$
v
=
\left(
\begin{array}{rr}
0 & -4
\end{array}
\right)
\bm{u}
\geq
\left(
\begin{array}{rr}
0 & -4
\end{array}
\right)
\bm{g}
=
\left(
\begin{array}{rr}
0 & -4
\end{array}
\right)
\left(
\begin{array}{r}
-9
\\
6
\end{array}
\right)
=
2,
$$
the solution to problem~\eqref{P-xAxCxxxg} gets its final form
$$
\bm{x}
=
\left(
\begin{array}{r}
0
\\
4
\end{array}
\right)
v,
\qquad
v\geq
2.
$$

Figure~\ref{F-IUPCP} (right) combines the solution constraints with the final solution to the entire constrained problem. The final solution is depicted with a thick half-line that coincides with the right border of the feasible area defined by the constraints. The half-line starts at the end point of vector $\bm{x}_{0}$, which represents the minimal solution.
\begin{figure}
\setlength{\unitlength}{1mm}
\begin{center}
\begin{picture}(45,45)

\put(0,20){\vector(1,0){45}}
\put(20,0){\vector(0,1){45}}

\put(20,20){\thicklines\vector(-2,-3){4.5}}
\put(5,10){\thicklines\line(1,1){26}}
\multiput(6,11)(1,1){25}{\line(1,0){1}}

\put(20,20){\thicklines\vector(-1,0){5}}

\put(9,7){\line(1,1){26}}

\put(20,20){\thicklines\vector(0,-1){14}}
\put(16,2){\thicklines\line(1,1){26}}
\multiput(17,3)(1,1){25}{\line(-1,0){1}}

\put(11,23){$\bm{a}_{2}^{\ast}$}

\put(15,10){$\bm{a}_{2}$}

\put(22,4){$\bm{a}_{1}=\bm{a}_{1}^{\ast}$}

\end{picture}
\hspace{20\unitlength}
\begin{picture}(50,45)

\put(0,20){\vector(1,0){50}}
\put(25,0){\vector(0,1){45}}

\put(25,20){\thicklines\vector(-2,-3){4.5}}
\put(10,10){\line(1,1){26}}
\multiput(11,11)(1,1){24}{\line(1,0){1}}



\put(25,20){\thicklines\vector(0,-1){14}}
\put(21,2){\line(1,1){26}}
\multiput(22,3)(1,1){25}{\line(-1,0){1}}

\put(25,20){\thicklines\vector(-4,-1){20}}
\put(2,17){\line(1,1){26}}
\multiput(17,32)(1,1){11}{\line(1,0){1}}

\put(25,20){\thicklines\vector(-1,0){20}}


\put(25,20){\thicklines\vector(0,1){9}}
\put(8,12){\line(1,1){26}}
\multiput(28,32)(1,1){6}{\line(0,1){1}}
\put(28,32){\pmb{\thicklines\line(1,1){6}}}

\put(25,20){\thicklines\vector(-3,2){18}}
\put(7,32){\line(1,0){21}}
\multiput(18,32)(1,0){11}{\line(0,1){1}}

\put(25,20){\thicklines\vector(1,4){3}}


\put(26,36){$\bm{x}_{0}$}

\put(4,33){$\bm{g}$}

\put(18,10){$\bm{a}_{2}$}

\put(1,22){$\bm{c}_{2}^{\ast}$}

\put(1,12){$\bm{c}_{2}$}

\put(27,4){$\bm{a}_{1}$}

\put(28,24){$\bm{c}_{1}=\bm{c}_{1}^{\ast}$}

\end{picture}
\end{center}
\caption{Solutions to unconstrained (left) and constrained (right) problems with irreducible matrices.}
\label{F-IUPCP}
\end{figure}
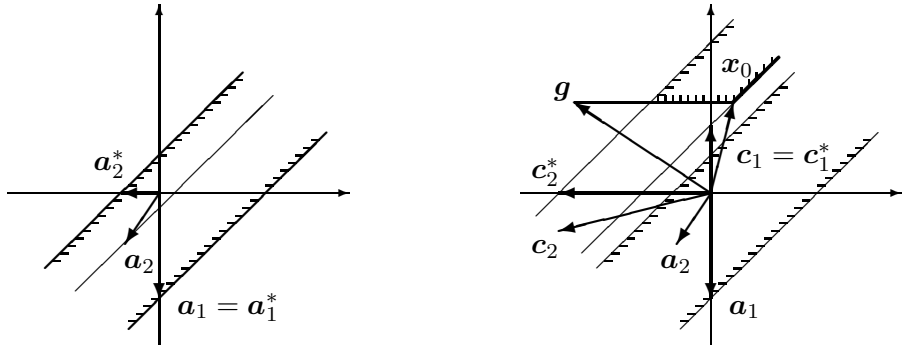

\subsubsection{Reducible matrices}

We now solve problem~\eqref{P-xAxCxxxg} with reducible matrices under the conditions that
$$
\bm{A}
=
\left(
\begin{array}{rr}
-2 & -\infty
\\
-4 & 0
\end{array}
\right),
\qquad
\bm{C}
=
\left(
\begin{array}{rr}
0 & -6
\\
-\infty & -4
\end{array}
\right),
\qquad
\bm{g}
=
\left(
\begin{array}{r}
3
\\
4
\end{array}
\right).
$$

We successively calculate
$$
\bm{A}^{2}
=
\left(
\begin{array}{rr}
-4 & -\infty
\\
-4 & 0
\end{array}
\right),
\qquad
\lambda
=
0,
\qquad
(\lambda^{-1}\bm{A})^{\ast}
=
\bm{A}^{\ast}
=
\left(
\begin{array}{rr}
0 & -\infty
\\
-4 & 0
\end{array}
\right).
$$

The solution to the problem without constraints is given by (see Figure~\ref{F-UPCP}, left)
$$
\bm{x}
=
\left(
\begin{array}{rr}
0 & -\infty
\\
-4 & 0
\end{array}
\right)
\bm{u},
\qquad
\bm{u}\in\mathbb{R}^{2}.
$$

Furthermore, we obtain the solution to the inequality constraints. Since
$$
\bm{C}^{2}
=
\left(
\begin{array}{rr}
0 & -6
\\
-\infty & -8
\end{array}
\right),
\qquad
\mathop\mathrm{Tr}(\bm{C})
=
0,
\qquad
\bm{C}^{\ast}
=
\left(
\begin{array}{rr}
0 & -6
\\
-\infty & 0
\end{array}
\right),
$$
the solution takes the form (Figure~\ref{F-UPCP}, right)
$$
\bm{x}
=
\left(
\begin{array}{rr}
0 & -6
\\
-\infty & 0
\end{array}
\right),
\qquad
\bm{u}
\geq
\left(
\begin{array}{r}
3
\\
4
\end{array}
\right).
$$

To solve the constrained problem, we apply Theorem~\ref{T-xAxCxxxg}. We have
$$
\bm{A}\bm{C}
=
\left(
\begin{array}{rr}
-2 & -8
\\
-4 & -4
\end{array}
\right),
\qquad
\theta
=
0,
\qquad
\theta^{-1}\bm{A}\oplus\bm{C}
=
(\theta^{-1}\bm{A}\oplus\bm{C})^{\ast}
=
\left(
\begin{array}{rr}
0 & -6
\\
-4 & 0
\end{array}
\right),
$$
and then write the solution to problem~\eqref{P-xAxCxxxg} in the form
$$
\bm{x}
=
\left(
\begin{array}{rr}
0 & -6
\\
-4 & 0
\end{array}
\right)\bm{u},
\qquad
\bm{u}
\geq
\left(
\begin{array}{r}
3
\\
4
\end{array}
\right).
$$

The solution is shown in Figure~\ref{F-UPCP} (right) as a half-strip region surrounded by a thick hatched border.
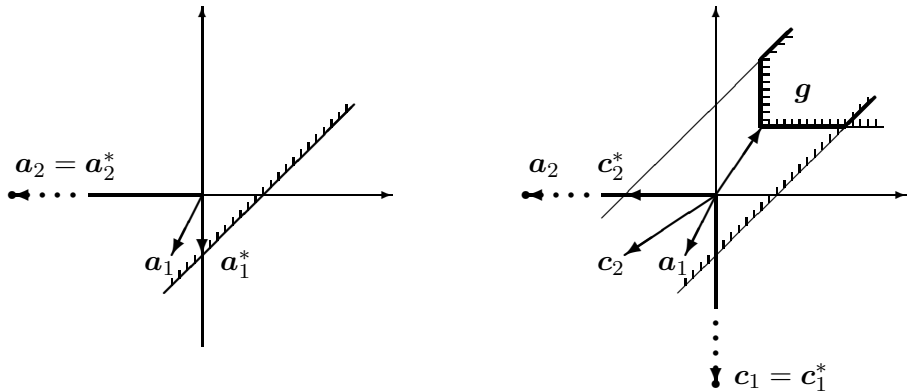
\begin{figure}
\setlength{\unitlength}{1mm}
\begin{center}
\begin{picture}(50,50)

\put(10,25){\vector(1,0){40}}
\put(25,5){\vector(0,1){45}}

\put(0,25){\circle*{1}}
\multiput(0,25)(2,0){5}{\circle*{.75}}

\put(25,25){\thicklines\line(-1,0){15}}
\put(2,25){\thicklines\vector(-1,0){2}}



\put(25,25){\thicklines\vector(0,-1){8}}
\put(25,25){\thicklines\vector(-1,-2){4}}


\put(20,12){\thicklines\line(1,1){25}}
\multiput(21,13)(1,1){24}{\line(0,1){1}}





\put(0,28){$\bm{a}_{2}=\bm{a}_{2}^{\ast}$}

\put(17,15){$\bm{a}_{1}$}
\put(27,15){$\bm{a}_{1}^{\ast}$}


\end{picture}
\hspace{15\unitlength}
\begin{picture}(50,50)

\put(10,25){\vector(1,0){40}}
\put(25,10){\vector(0,1){40}}

\put(0,25){\circle*{1}}
\multiput(0,25)(2,0){5}{\circle*{.75}}

\put(25,25){\thicklines\line(-1,0){15}}
\put(2,25){\thicklines\vector(-1,0){2}}

\put(25,0){\circle*{1}}
\multiput(25,0)(0,2){5}{\circle*{.75}}

\put(25,25){\thicklines\line(0,-1){15}}
\put(25,2){\thicklines\vector(0,-1){2}}

\put(25,25){\thicklines\vector(-1,-2){4}}

\put(31,43){\pmb{\thicklines\line(1,1){4}}}
\put(10,22){\line(1,1){25}}
\multiput(31,43)(1,1){4}{\line(1,0){1}}

\put(42,34){\pmb{\thicklines\line(1,1){4}}}
\put(20,12){\line(1,1){25}}
\multiput(21,13)(1,1){25}{\line(0,1){1}}

\put(25,25){\thicklines\vector(-3,-2){12}}
\put(25,25){\thicklines\vector(-1,0){12}}

\put(25,25){\thicklines\vector(2,3){6}}

\put(31,34){\pmb{\thicklines\line(0,1){9}}}
\multiput(31,34)(0,1){9}{\line(1,0){1}}

\put(31,34){\line(1,0){16}}
\put(31,34){\pmb{\thicklines\line(1,0){11}}}
\multiput(31,34)(1,0){16}{\line(0,1){1}}

\put(0,28){$\bm{a}_{2}$}

\put(17,15){$\bm{a}_{1}$}

\put(9,28){$\bm{c}_{2}^{\ast}$}
\put(9,15){$\bm{c}_{2}$}

\put(27,0){$\bm{c}_{1}=\bm{c}_{1}^{\ast}$}

\put(35,38){$\bm{g}$}

\end{picture}
\end{center}
\caption{Solutions to unconstrained (left) and constrained (right) problems with reducible matrices.}
\label{F-UPCP}
\end{figure}

\section{Conclusions}

We started this paper with an overview of known tropical optimisation problems to demonstrate that tropical optimisation is currently under intense development as a promising solution approach to many real-world problems. The paper examines a new optimisation problem with a nonlinear objective function and linear inequality constraints in a rather general setting. The problem sufficiently extends that described in \cite{Krivulin2012Atropical} by eliminating the irreducibility conditions for the matrices involved and by introducing additional constraints. We offer an example problem drawn from project scheduling as motivation and illustration of the study.

The derivation of the solution is based on the definitions, notations, and basic results developed in \cite{Krivulin2006Solution,Krivulin2006Eigenvalues,Krivulin2009Methods}, which are outlined in a brief introduction that is supplemented by appropriate graphical illustrations. As a preliminary step, a new complete solution to a linear inequality was obtained for the case of arbitrary matrices. The solution extends the known results presented in \cite{Krivulin2006Solution,Krivulin2009Methods} and is of independent interest.

The solution to the optimisation problem under study follows the approach suggested in \cite{Krivulin2012Atropical}, which reduces the problem to the solution of a linear inequality with a parameterised matrix. By applying the above result for linear inequalities, we obtain a complete solution to the optimisation problem in a compact vector form. To illustrate the result, numerical solutions and graphical representations are given for two-dimensional problems.

Future research is expected to concentrate on the further extension of the problem to account for new types of constraints, including equality constraints, and a more general form of the objective function. Another line of investigation will be the development of new real-world applications of the results.

\section*{Acknowledgements}

This work was supported in part by the Russian Foundation for Humanities (grant No.~13-02-00338). The author thanks two referees and the associate editor for valuable comments and suggestions, which have been incorporated into the revised version of the manuscript.

\bibliographystyle{abbrvurl}

\bibliography{A_multidimensional_tropical_optimization_problem_with_a_nonlinear_objective_function_and_linear_constraints}

\end{document}